\newtheorem{theorem}{Theorem}[section]
\newtheorem{lemma}[theorem]{Lemma}
\newtheorem{proposition}[theorem]{Proposition}
\theoremstyle{definition}
\newtheorem{remark}[theorem]{Remark}
\newcommand{\q}{\quad}
\newcommand{\e}{\varepsilon}
\def\theequation{\thesection.\arabic{equation}}
\newcommand\C{{\mathbb C}}
\newcommand\R{{\mathbb R}}
\newcommand\N{{\mathbb N}}
\renewcommand{\phi}{\varphi}
\newcommand{\mt}{\mu_T}
\newcommand{\mi}{\mu_I}
\newcommand{\mv}{\mu_V}
\title[Heterogeneous viral environment in a HIV spatial model]
{Heterogeneous viral environment\\ in a HIV spatial model}
\author[C.-M. Brauner, D. Jolly, L. Lorenzi, R. Thiebaut ]{}
\subjclass{Primary: 35K55; Secondary: 35B35, 92C50}
 \keywords{}
 \email{claude-michel.brauner@math.u-bordeaux1.fr}
 \email{danaelle.jolly@math.u-bordeaux1.fr}
 \email{luca.lorenzi@unipr.it}
 \email{rodolphe.thiebaut@isped.u-bordeaux2.fr}
\thanks{Work partially supported by the research project
ANR-BBSRC SysBio: Applied statistical and mathematical modelling of
peripheral T-Lymphocyte homeostasis}
\begin{document}
\maketitle

\centerline{\scshape Claude-Michel Brauner}
\medskip
{\footnotesize
 \centerline{Institut de Math\'ematiques de Bordeaux}
   \centerline{Universit\'e de Bordeaux,
33405 Talence cedex (France)}
} 

\medskip

\centerline{\scshape Danaelle Jolly}
\medskip
{\footnotesize
 \centerline{Institut de Math\'ematiques de Bordeaux}
   \centerline{Universit\'e de Bordeaux, 33405 Talence cedex (France)}
} %

\medskip

\centerline{\scshape Luca Lorenzi}
\medskip
{\footnotesize
 \centerline{Dipartimento di Matematica}
   \centerline{Universit\`a
di Parma, Viale G.P. Usberti 53/A, 43100 Parma (Italy)}
} %

\medskip
\centerline{\scshape Rodolphe Thiebaut}
\medskip
{\footnotesize
 \centerline{(M.D.)  Equipe Biostatistique de l'U897 INSERM
ISPED}
   \centerline{Universit\'e de Bordeaux,
33076 Bordeaux cedex (France)}
} %

\bigskip


\begin{abstract}
We consider the basic model of virus dynamics in the modeling of
Human Immunodeficiency Virus (HIV), in a $2D$ heterogenous
environment. It consists of two ODEs for the non-infected and
infected $CD_4^+$ $T$-lymphocytes, $T$ and $I$, and a parabolic PDE
for the virus $V$. We define a new parameter $\lambda_0$ as an
eigenvalue of some Sturm-Liouville problem, which takes the
heterogenous reproductive ratio into account. For $\lambda_0<0$ the
trivial non-infected solution is the only equilibrium. When
$\lambda_0>0$, the former becomes unstable whereas there is only one
positive infected equilibrium. Considering the model as a dynamical
system, we prove the existence of a universal attractor. Finally, in
the case of an alternating structure of viral sources, we define a
homogenized limiting environment. The latter justifies the classical
approach via ODE systems.
\end{abstract}

\section{Introduction}
The acute infection by the Human Immunodeficiency Virus (in short
HIV) is characterized by a huge depletion of the
$CD_4^+~T$-lymphocytes ($CD_4$) and a peak of the virus load
\cite{daar}. After few weeks, these two components reach a steady
state which characterizes the asymptomatic phase of the infection.
Before the availability of highly active antiretroviral therapy,
this later phase lasted after $10$ years in median with an
accelerated decrease of $CD_4$ and an increase of virus load. A
substantial number of nonlinear ODE systems have been suggested by
Perelson et al. (see \cite{perelson, perelson2}) to model the
complex dynamics of HIV-host interaction. For instance, such models
have been used to estimate the infected cell half-life and the viral
clearance during antiretroviral therapy \cite{ho,perelson0,wei}, or
to understand the dynamics during acute infection \cite{phillips}.

The common basic model of viral dynamics \cite{bonhoeffer} includes
three variables: $T$, the non-infected $CD_4$, $I$, the infected
$CD_4$, and $V$, the free virus:
\setcounter{equation}{1}
\begin{align}
&T_t=\alpha - \gamma V T - \mu_T T,\tag{\theequation a}
\label{ode1}
\\
&I_t= \gamma V T - \mu_I I,\tag{\theequation b}\label{ode2}
\\
&V_t=N \mi I- \mu_V V.\tag{\theequation c}\label{ode3}
\end{align}
This model, describing the interaction between the replicating virus
of HIV and host cells, is based on some simple hypotheses.
Non-infected target cells are produced by the thymus at a constant
rate $\alpha$ and die at a rate $\mt$. By contact with the free
virus particles (virions) they become infected at a rate
proportional to their abundance, $\gamma V T$ (see Fig.
\ref{contact}). These infected cells die at a rate $\mi$ and produce
free viruses during their life-time at a rate $N$. Free particles
are removed at a rate called the clearance $\mv$. All these
parameters are generally positive constants. This simple model study
has led to interesting results (see \cite{nowak2001,wangsong}) and
suggested a treatment strategy (see \cite{bonhoeffer}). \vskip
-5truemm
\begin{figure}[htp]
\begin{center}
\epsfig{file=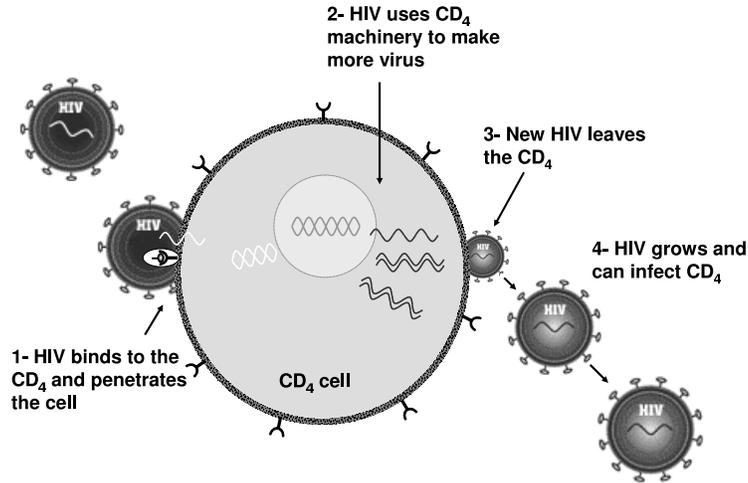,scale=0.4,angle=-90}
\\[6mm]
\caption{The HIV life cycle}
\end{center}
\label{contact}
\end{figure}%

It is easily seen that the system has two equilibria:
\begin{enumerate}[(i)]
\item
the {\it non-infected} steady state
\begin{eqnarray*}
T_u = \frac{\alpha}{\mt}, \quad I_u = 0, \quad V_u = 0,
\end{eqnarray*}
which corresponds to a non-negative equilibrium in case of no
infection;
\item
the {\it infected} steady state
\begin{equation}
T_i = \frac{\mv}{\gamma N}, \quad I_i = \frac{\alpha \gamma N - \mt
\mv}{\gamma N }, \quad V_i = \frac{\alpha \gamma N - \mt \mv}{\gamma
\mv}, \label{T-inst-const}
\end{equation}
also called {\it seropositivity} steady state, corresponding
to a positive equilibrium in case of infection.
\end{enumerate}

Some authors (see e.g., \cite{bonhoeffer,nowak2001}) have considered the
basic reproductive ratio $R_0$:
\begin{equation}
R_0= \frac{\gamma\alpha N}{\mt \mv},
\label{R0-def}
\end{equation}
a dimensionless parameter defined by epidemiologists as {\it the
average number of infected cells that derive from any one infected
cell in the beginning of the infection} \cite[p. 16]{nowak2001}.
Stability properties of the two steady states are usually studied
around this quantity: if $R_0<1$ the non-infected steady state is
stable, if $R_0>1$ the infected steady state has a biological
meaning and it is stable, while at $R_0=1$ both steady states
coincide. So, $R_0=1$ is a bifurcation point (see Fig.
\ref{bifur1}).
\begin{figure}[htp]
\begin{center}
\epsfig{file=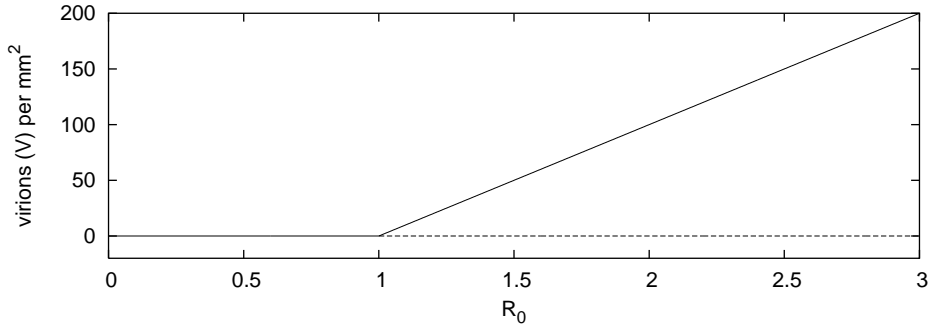,scale=1}
\end{center}
\caption{Typical bifurcation diagram, with $R_0$ varying between $0$
and $3$, as a function of $\alpha$. The solid line represents the
stable branch of $V$, the dashed line the unstable one. The other
parameters have the values:
$\gamma=0.001,~N=1000,~\mt=0.1,~\mi=0.5,~\mv=10$.} \label{bifur1}
\end{figure}%
\begin{figure}[htp]
\begin{center}
\epsfig{file=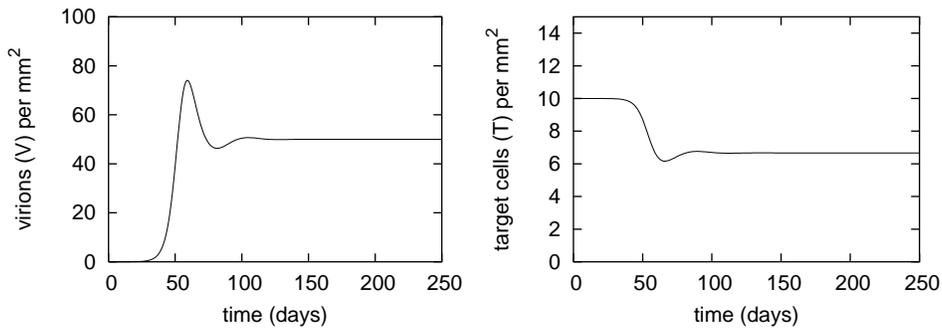,scale=1}
\end{center}
\caption{Profiles of virus (left) and target cells (right) in the
case of infection. Here $R_0=1.5$ and the other parameters have the
same values as in Fig. \ref{bifur1}.}
 \label{comparison}
\end{figure}%
Further models have been used involving other populations present in
the immune system (see \cite{nowak1996,nowak2001,callaway}).
However, these models assume that the populations $T,~I,~V$ are
homogeneous over the space for all time, which is a common, but not
a very realistic, assumption. Actually, the interaction between the
virus and the immune system (either as a target with $CD_4$ or as an
agent for controlling infection) is localized according to the type
of tissues \cite{brenchley} and also in a given tissue (e.g. lymph
nodes). To examine the effects of both diffusion and spatial
heterogeneity, Funk et al. \cite{funk} introduced a discrete model
based on \eqref{ode1}-\eqref{ode3}. These authors adopted a
two-dimensional square grid with $21 \times 21$ sites and assumed
that the virus can move to the eight nearest neighboring sites. They
pointed out that the presence of a spatial structure enhances
population stability with respect to non-spatial models. However,
our analysis does not confirm this observation (see Section
\ref{special} below).

Recently, Wang et al. \cite{Wang} generalized Funk et al.'s model.
They assumed that the hepatocytes can not move under normal
conditions and neglected their mobility, while viruses can move
freely and their motion follows a Fickian diffusion. They proposed
the following system of two ODEs coupled with a parabolic PDE for
the virus: \setcounter{equation}{4}
\begin{align}
&T_t=\alpha - \gamma V T - \mu_T T,\tag{\theequation a}\label{pde1}\\
&I_t= \gamma V T - \mu_I I,\tag{\theequation b}\label{pde2}\\
&V_t=N \mi I- \mu_V V + d_V \Delta V,\tag{\theequation
c}\label{pde3}
\end{align}
where $d_V$ is the diffusion coefficient. They assumed that the
domain is the whole real line and proved the existence of
traveling waves. 
Wang et al. \cite{kwang2008} introduced a delay to take into
account the time between infection of a target cell and the emission
of viral particles \cite{culshaw}. They considered \eqref{pde1}-\eqref{pde3} in
a one-dimensional interval with Neumann boundary conditions.

In the spirit of the above works, we intend to study System
\eqref{pde1}-\eqref{pde3} in a two-dimensional spatial domain $(0,\ell)\times (0,\ell)$
with periodic boundary conditions. There are two main situations:
\begin{enumerate}[\rm (i)]
\item
the environment is {\it homogeneous} and, hence, all the parameters in
\eqref{pde1}-\eqref{pde3} are constant. Therefore, the system with
diffusion has the same equilibria as System \eqref{ode1}-\eqref{ode3};
\item
the environment is {\it heterogeneous}, therefore certain parameters become positive
functions of the space variable. Then, the virus is spatially structured.
\end{enumerate}

For simplicity, we assume throughout the paper that only the rate
$\alpha$ varies while the other parameters are fixed positive
constants. In fact, it is biologically plausible to assume that the
arrival of new $CD_4$ may vary according to local areas. More
precisely, $\alpha$ is piecewise continuous and periodic in each
variable with period $\ell$.
Then it is convenient to define the {\it heterogeneous reproductive ratio}:
\begin{equation}
R_0(x)=\frac{\gamma N \alpha(x)}{\mt \mv},\qquad\;\,x\in
(0,\ell)\times (0,\ell). \label{R0-def-1}
\end{equation}
The sites where $R_0(x)<1$ are called {\it sinks} while the sites where
$R_0(x)>1$ are called {\it sources} \cite{funk}.

The paper is organized as follows. In Section \ref{sec-steady} we
are interested in the stationary problem associated with
\eqref{pde1}-\eqref{pde3} and its non-negative equilibria. The virus
equilibrium verifies the elliptic semilinear equation
\begin{equation}
\label{semil-intro} d_V \Delta V - \mv V =  -\mt\mv R_0(x)\frac{V}{\gamma V +\mt},
\end{equation}
with periodic boundary conditions.
A first issue is to define a parameter which will play
the role of the bifurcation parameter $R_0$ in the case the latter is constant.
A candidate for this role is the largest eigenvalue $\lambda_0$ of the operator:
\begin{equation}
\label{eigen-intro}
d_V\Delta +\mv(R_0-1)Id,
\end{equation}
which is the linearization around $V\equiv 0$ of
\eqref{semil-intro}. For the reader's convenience, we recall some
basic facts about two-dimensional Sturm-Liouville eigenvalue
problems with periodic boundary conditions such as
\eqref{eigen-intro} and give some proofs in Appendix \ref{app-A}.

It is clear that, whenever $R_0$ is a constant, $\lambda_0= \mv(R_0-1)$. Therefore, we
distinguish two cases, depending upon the sign of $\lambda_0$:
\begin{enumerate}[\rm (i)]
\item $\lambda_0 \leq 0 $: the trivial non-infected solution $V_u\equiv 0$ is the only solution
of \eqref{semil-intro};
\item $\lambda_0>0$: \eqref{semil-intro} has exactly two non-negative solutions, namely
the trivial non-infected solution $V_u\equiv 0$ and the positive
infected solution $V_i$.
\end{enumerate}

Section \ref{sec-3} is devoted to the study of the evolution problem
\eqref{pde1}-\eqref{pde3}. In the case $\lambda_0<0$, we prove that
the trivial non-infected solution $(T_i,I_i,V_i)$ is asymptotically
stable. Then, we turn our attention to the biologically relevant
case $\lambda_0>0$. First, we prove the non-infected solution
becomes unstable. Second, we consider \eqref{pde1}-\eqref{pde3} as a
dynamical system and prove the existence of an universal (or
maximal) attractor. Since the system is only partly dissipative, we
use a result of Marion \cite{marion}. The following Section
\ref{special} is devoted to some special cases where the positive
infected solution $V_i$ is stable. Particular attention is paid to
the case when $R_0$ is a constant: in this case discrete Fourier
transform can be applied.

We point out that our proof can be extended to further models in HIV
literature. It is not difficult to take a logistic term into account
in the $T$ equation \cite{perelson2}, although the steady equation
\eqref{semil-intro} will be more involved. Adding such a term, Hopf
bifurcations have been observed numerically in ODE systems (see
\cite{perelson0}). Therefore, proving the stability of the infected
solution may be, in general, challenging.

In the last section (Section \ref{sec-5}), we consider the case when
a heterogeneous environment is formed of sinks and sources
alternating very rapidly, with a heterogeneous reproductive ratio
$R_0(\frac{x}{\e})$. We determine the homogenized limiting medium as
$\e \to 0$. It is fully characterized by a constant reproductive
ratio, the mean value of $R_0$. Therefore, the classical approach of
HIV dynamics via ODEs in a homogenous environment can be {\em a
posteriori} justified in this respect.
\subsection*{Notation}
Throughout this paper, for any $\ell>0$, we denote by $L^2$ the
usual space of functions $f:(0,\ell)^2\to\mathbb R$ such that $f^2$
is integrable. The square $(0,\ell)^2$ will be simply denoted by
$\Omega_{\ell}$. By $H^{k}$ we denote the Sobolev space of order
$k$, i.e., the subset of $L^2$ of all the functions whose
distributional derivatives up to $k$-th order are in $L^2$. Both
$L^2$ and $H^2$ are endowed with their Euclidean norm. Finally, by
$H^k_{\sharp}$ we denote the closure in $H^k$ of the space
$C^m_{\sharp}$ of all $m$-th continuously differentiable functions
$f:\R^2\to\R$ which are periodic with period $\ell$ in each
variable. The space $H^k_{\sharp}$ is endowed with the norm of
$H^k$.
Finally, we denote by $Id$ the identity operator.

\section{A semilinear equation for the virus steady states: existence
and uniqueness of the equilibria} \label{sec-steady}\setcounter{equation}{1}

We start from the system for the virus dynamics:
\begin{align}
&T_t=\alpha - \gamma V T - \mu_T T,\tag{\theequation a}\label{pde1-bis}\\
&I_t= \gamma V T - \mu_I I,\tag{\theequation b}\label{pde2-bis}
\\
&V_t=N \mi I- \mu_V V + d_V \Delta V, \tag{\theequation
c}\label{pde3-bis}
\end{align}
set in $\Omega_{\ell}$. Periodic boundary conditions for $T, I$ and
$V$ are prescribed.

We are interested in the existence of steady state solutions to the
equations \eqref{pde1-bis}-\eqref{pde3-bis} which belong to the
space $L^2\times L^2\times H^2_{\sharp}$. Clearly, any steady state
solution to Problem \eqref{pde1-bis}-\eqref{pde3-bis} is a solution
to the following stationary system: \setcounter{equation}{2}
\begin{align}
&\alpha - \gamma V T - \mu_T T =0,\tag{\theequation
a}\label{ss1}\\
&\gamma V T - \mu_I I =0,\tag{\theequation
b}\label{ss2}\\
& N \mi I- \mu_V V + d_V \Delta V= 0.\tag{\theequation c}\label{ss3}
\end{align}
 From a biological point of view, only non-negative solutions to
\eqref{ss1}-\eqref{ss3} have a meaning. Hence, we limit ourselves to
proving the existence of this kind of steady state solutions.

System \eqref{ss1}-\eqref{ss3} can be reduced to a single scalar
equation for the unknown $V$. Actually, it is not difficult to infer
from \eqref{ss1}, \eqref{ss2} that
\begin{eqnarray*}
T=\frac{\alpha}{\gamma V +\mt}, \quad I=\frac{\gamma\alpha V}{\mi(\gamma V +\mt)}.
\end{eqnarray*}
Hence, the function $V$ turns out to solve the equation
\begin{equation}
\label{semil} d_V \Delta V - \mv V = - \frac{\gamma\alpha N
V}{\gamma V +\mt}= -\mt\mv R_0\frac{V}{\gamma V +\mt},
\end{equation}
associated with periodic boundary conditions.

\subsection{Existence and uniqueness of non-negative equilibria}
\label{sect-3} In this subsection we will provide a thorough study
of the equation \eqref{semil}. As it has been already stressed, we
are interested in non-negative solutions only.

Clearly, equation \eqref{semil} always admits the trivial
non-infected solution $V\equiv 0$ and, hence, Problem
\eqref{pde1-bis}-\eqref{pde3-bis} admits
\begin{equation}
T_u(x)=\frac{\alpha(x)}{\mt},\qquad I_u(x)=0,\qquad
V_u(x)=0,\qquad\;\,x\in \Omega_{\ell}, \label{non-infected-1}
\end{equation}
as a (trivial) steady state solution. We will call the triplet
$(T_u,I_u,V_u)$ the non-infected solution.

We are interested in studying the uniqueness of the non-infected
solution in the class of all the non-negative steady state solutions
to Problem \eqref{pde1-bis}-\eqref{pde3-bis}. Of course, in the case
when uniqueness does not hold (a situation which can actually occur,
look for instance at the case when $R_0>1$ and $\alpha$ is constant,
discussed in the introduction) we want to characterize all
biological relevant steady state solutions to Problem
\eqref{pde1-bis}-\eqref{pde3-bis}.

For this purpose, we need to recall the following results about Sturm-Liouville
eigenvalue problems in dimension two with periodic boundary conditions.

\begin{theorem}
\label{thm-2.1}\label{sturm-eigen} Let $d$ and $\mu$ be,
respectively, a positive constant and a bounded measurable function.
Further, let ${\mathscr A}:H^2_{\sharp}\to L^2$ be the operator
defined by ${\mathscr A}u=d\Delta u-\mu u$ for any $u\in
H^2_{\sharp}$. Then, the spectrum of ${\mathscr A}$ consists of
eigenvalues only. Moreover, its maximum eigenvalue $\lambda_{\max}$
is given by the following formula:
\begin{equation}
\label{variat} \lambda_{\max} = -\inf_{\psi \in H^1_{\sharp}, \psi
\not\equiv 0} \left\{\frac{d\int_{\Omega_{\ell}}|\nabla\psi|^2 dx +
\int_{\Omega_{\ell}}\mu\psi^2 dx} {\int_{\Omega_{\ell}}\psi^2 dx}
\right\}.
\end{equation}
Finally, the eigenspace corresponding to the eigenvalue
$\lambda_{\max}$ is one dimensional and contains functions which do
not change sign in $\overline{\Omega_{\ell}}$.
\end{theorem}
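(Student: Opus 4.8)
The plan is to treat $\mathscr{A}$ as a self-adjoint, bounded-above operator on $L^2$ and extract its spectral information from the associated quadratic form. First I would set up the bilinear form $a(u,v)=d\int_{\Omega_\ell}\nabla u\cdot\nabla v\,dx+\int_{\Omega_\ell}\mu uv\,dx$ on $H^1_\sharp$. Since $d>0$ and $\mu$ is bounded and measurable, this form is continuous on $H^1_\sharp$ and, after adding a large constant $C\,\|u\|_{L^2}^2$ (with $C>\|\mu\|_\infty$, say), becomes coercive on $H^1_\sharp$ thanks to the $d\int|\nabla u|^2$ term controlling the seminorm. The operator $\mathscr{A}$ is symmetric because both the Laplacian with periodic boundary conditions and multiplication by the real function $\mu$ are symmetric, so $-\mathscr{A}+C\,Id$ is a positive, self-adjoint, boundedly invertible operator.

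The discreteness of the spectrum would then follow from compactness. Because $H^1_\sharp\hookrightarrow L^2$ compactly (Rellich--Kondrachov on the periodic torus $\Omega_\ell$), the inverse $(-\mathscr{A}+C\,Id)^{-1}$, viewed as a map $L^2\to H^1_\sharp\hookrightarrow L^2$, is a compact self-adjoint operator. Hence its spectrum consists of a sequence of eigenvalues accumulating only at $0$, and transferring back through the shift by $C$ shows that $\mathscr{A}$ has purely discrete spectrum consisting of eigenvalues bounded above, with a largest one $\lambda_{\max}$. The variational formula \eqref{variat} is then the standard Courant--Fischer characterization of the top eigenvalue: $\lambda_{\max}=\sup_{\psi\neq 0} a(\psi,\psi)/\|\psi\|_{L^2}^2$ is rewritten, using $a(\psi,\psi)=d\int|\nabla\psi|^2+\int\mu\psi^2$ and $\sup(-F)=-\inf F$, exactly as the displayed infimum.

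The part demanding the most care is the final assertion that the top eigenspace is one-dimensional and spanned by a non-sign-changing eigenfunction. The standard route is: if $\psi$ achieves the supremum, then so does $|\psi|$, because $\int|\nabla|\psi||^2=\int|\nabla\psi|^2$ and $\psi^2=|\psi|^2$ leave the Rayleigh quotient unchanged; hence $|\psi|$ is also a maximizer and therefore a non-negative eigenfunction. A strong maximum principle (or Harnack inequality) argument, applied to the elliptic equation $d\Delta|\psi|=(\mu-\lambda_{\max})|\psi|$ on the torus, shows that a non-negative, not-identically-zero solution is in fact strictly positive, so the eigenfunction cannot vanish and thus does not change sign. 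Simplicity then follows: if two linearly independent eigenfunctions existed, both could be taken strictly positive (by the preceding argument applied to suitable real linear combinations), but two strictly positive $L^2$ functions cannot be orthogonal, contradicting the orthogonality of eigenfunctions within an eigenspace of a self-adjoint operator.

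I expect the genuine obstacle to be the regularity and maximum-principle step, since $\mu$ is only bounded and measurable: the strong maximum principle in its classical form requires more smoothness, so one must either invoke a Harnack inequality valid for $L^\infty$ coefficients (De Giorgi--Nash--Moser theory) or use the Krein--Rutman theorem applied to the positivity-preserving resolvent $(-\mathscr{A}+C\,Id)^{-1}$. The Krein--Rutman approach is cleanest: the resolvent is compact and, being the inverse of an operator comparable to $C\,Id-d\Delta$, maps non-negative functions to strictly positive ones (positivity-improving), so its spectral radius is a simple eigenvalue with a positive eigenvector, which translates directly into the simplicity and positivity claims for $\lambda_{\max}$.
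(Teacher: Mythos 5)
Your proposal is correct and follows the same overall strategy as the paper: reduce to a compact self-adjoint resolvent, read off the discreteness of the spectrum and the Rayleigh-quotient formula, and obtain simplicity and positivity of the top eigenfunction from a positivity-improving property. The differences are worth noting. For the spectral part, the paper does not shift by $C\,Id$ and work on $L^2$; it inverts $\mathscr{A}_0=\mathscr{A}-\lambda_0 Id$ and views $\mathscr{A}_0^{-1}$ as a compact self-adjoint operator on $H^1_{\sharp}$ equipped with the inner product $\langle v,w\rangle=d\int\nabla v\cdot\nabla w+\int(\mu+\lambda_0)vw$, for which the identity $\langle\mathscr{A}_0^{-1}u,v\rangle=-\int uv$ makes the passage from the minimum eigenvalue of the inverse to the variational formula \eqref{variat} a one-line computation; your Courant--Fischer route on $L^2$ reaches the same formula with the same ingredients (compact embedding of $H^1_{\sharp}$ into $L^2$, self-adjointness). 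For the last assertion, the paper goes directly to Krein--Rutman applied to the restriction of $\mathscr{A}_0^{-1}$ to $C_{\sharp}(\R^2)$, verifying positivity-improvement via the classical maximum principle for $(\mu+\lambda_0)u-d\Delta u=f$ (which only needs $\mu\in L^\infty$, since the principal part has constant coefficients); this is precisely the alternative you single out as cleanest. Your primary route via $|\psi|$ plus a strong maximum principle is also viable, but, as you correctly flag, with $\mu$ merely bounded and measurable it requires De Giorgi--Nash--Moser/Harnack rather than the classical Hopf version, and the simplicity step needs the additional observation that $\psi\in H^2_{\sharp}$ is continuous (so that non-vanishing of $|\psi|$ forces a fixed sign of $\psi$); the Krein--Rutman route packages both simplicity and positivity at once, which is why the paper prefers it.
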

This is a rather classical result. Nevertheless, for the reader's
convenience, we give a proof in Appendix \ref{app-A}.

In view of Theorem \ref{thm-2.1}, we can define the constant
$\lambda_0$ to be the maximum eigenvalue of the operator
$\varphi\mapsto d_V\Delta \varphi+\mv(R_0-1)\varphi$, which is the
linearization around $V\equiv 0$ of operator $V\mapsto d_V \Delta V
- \mv V +\mt\mv R_0\frac{V}{\gamma V +\mt}$. According to
\eqref{variat},
\begin{equation}
\label{variat-1} -\lambda_0 = \inf_{\psi \in H^1_{\sharp}, \psi
\not\equiv 0} \left\{\frac{d_V\int_{\Omega_{\ell}}(\psi_x)^2 dx +
\mv\int_{\Omega_{\ell}}(1-R_0)\psi^2 dx} {\int_{\Omega_{\ell}}\psi^2
dx} \right\}. \end{equation}

As we are going to show, the uniqueness of the non-infected
steady state solution is related to the value of $\lambda_0$.

\begin{lemma}
\label{lemma-2.2} If $\lambda_0 \leq 0$, then the non-infected
solution $(T_u,I_u,V_u)$ $($see \eqref{non-infected-1}$)$ is the
only non-negative solution of \eqref{pde1-bis}-\eqref{pde3-bis}.
\end{lemma}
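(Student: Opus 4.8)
The plan is to reduce everything to the scalar equation \eqref{semil} and to run an energy argument driven by the variational characterization \eqref{variat-1} of $\lambda_0$. First I would note that, as already observed, any non-negative steady state is determined by its $V$-component through $T=\alpha/(\gamma V+\mt)$ and $I=\gamma\alpha V/(\mi(\gamma V+\mt))$; since $\alpha>0$ and $V\ge 0$, these formulas automatically yield $T\ge 0$ and $I\ge 0$. Hence it suffices to prove that, when $\lambda_0\le 0$, the only non-negative solution $V\in H^2_{\sharp}$ of \eqref{semil} is $V\equiv 0$. I would argue by contradiction, assuming there exists a non-negative solution $V\not\equiv 0$.

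The key computation is to test \eqref{semil} against $V$ itself. Multiplying \eqref{semil} by $V$, integrating over $\Omega_{\ell}$ and integrating by parts (the boundary terms drop out by periodicity) gives
\begin{equation*}
d_V\int_{\Omega_{\ell}}|\nabla V|^2\,dx+\mv\int_{\Omega_{\ell}}V^2\,dx=\mt\mv\int_{\Omega_{\ell}}R_0\frac{V^2}{\gamma V+\mt}\,dx.
\end{equation*}
The crucial algebraic step is to write $\frac{\mt}{\gamma V+\mt}=1-\frac{\gamma V}{\gamma V+\mt}$, which splits the right-hand side into the linearized contribution $\mv\int_{\Omega_{\ell}}R_0V^2\,dx$ and a manifestly non-negative cubic remainder. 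After rearranging, this produces the identity
\begin{equation*}
d_V\int_{\Omega_{\ell}}|\nabla V|^2\,dx+\mv\int_{\Omega_{\ell}}(1-R_0)V^2\,dx+\mv\gamma\int_{\Omega_{\ell}}R_0\frac{V^3}{\gamma V+\mt}\,dx=0.
\end{equation*}

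Now I would invoke \eqref{variat-1} with the admissible test function $\psi=V$, which bounds the sum of the first two terms from below by $-\lambda_0\int_{\Omega_{\ell}}V^2\,dx$. Since $\lambda_0\le 0$, that quantity is non-negative, and the cubic term is non-negative as well because $R_0>0$ and $V\ge 0$. Thus the identity above expresses a sum of two non-negative quantities as zero, forcing each to vanish; in particular $\mv\gamma\int_{\Omega_{\ell}}R_0\frac{V^3}{\gamma V+\mt}\,dx=0$, whence $V\equiv 0$ (recall $R_0>0$ on $\Omega_{\ell}$ since $\alpha$ is a positive function, and $H^2_{\sharp}\hookrightarrow C(\overline{\Omega_{\ell}})$ in dimension two). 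This contradicts $V\not\equiv 0$.

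I expect the only genuinely delicate point to be the borderline case $\lambda_0=0$, where the variational inequality \eqref{variat-1} is saturated and the crude estimate $\frac{\mt}{\gamma V+\mt}\le 1$ alone is too lossy to conclude. The device that resolves this is precisely keeping the cubic remainder explicit rather than discarding it: it supplies an extra strictly positive term that must vanish, and this pins $V$ to zero regardless of whether $\lambda_0<0$ or $\lambda_0=0$. (For $\lambda_0<0$ one could argue more directly, since then $-\lambda_0\int_{\Omega_{\ell}}V^2\,dx>0$ already contradicts the identity; retaining the cubic term is what unifies the two cases.)
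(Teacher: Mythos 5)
Your proof is correct and follows essentially the same route as the paper: multiply \eqref{semil} by $V$, integrate by parts, split off the linearized part via $\tfrac{\mt}{\gamma V+\mt}=1-\tfrac{\gamma V}{\gamma V+\mt}$, and use the variational characterization \eqref{variat-1} of $\lambda_0$ together with the strict positivity of the cubic remainder to force $V\equiv 0$. The only cosmetic difference is that the paper concludes by noting the first two terms must then be strictly negative, contradicting $-\lambda_0\ge 0$ directly, whereas you conclude from the vanishing of the cubic term; both are valid.
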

\begin{proof}
We argue by contradiction. Let us suppose that Problem
\eqref{ss1}-\eqref{ss3} admits another solution $(T^*,I^*,V^*)$
different from $(T_u,I_u,V_u)$. Then, the function $V^*\in
H^2_{\sharp}$ does not identically vanish in $\Omega_{\ell}$ and it
solves the equation \eqref{semil}. Multiplying both the sides of
this equation by $V^*$ and integrating by parts in $\Omega_{\ell}$,
we get:
\begin{eqnarray*}
d_V \int_{\Omega_{\ell}} |\nabla V^*|^2 dx + \mv\int_{\Omega_{\ell}}
\left(1 -
  \frac{\mt R_0}{\gamma V^*+\mt}\right)(V^*)^2 dx = 0,
  \end{eqnarray*}
or, equivalently,
\begin{align*}
d_V \int_{\Omega_{\ell}}& |\nabla V^*|^2 dx + \mv
\int_{\Omega_{\ell}}
\left(1-R_0\right)(V^*)^2 dx\\
&+ \mv\int_{\Omega_{\ell}} R_0\left(1 -
  \frac{\mt}{\gamma V^*+\mt}\right)(V^*)^2 dx=0.
\end{align*}
Since $V^*$ does not identically vanish in $\Omega_{\ell}$, the last
integral term is positive, implying that
\begin{eqnarray*}
d_V \int_{\Omega_{\ell}} |\nabla V^*|^2 dx + \mv
\int_{\Omega_{\ell}} \left(1 - R_0\right)(V^*)^2 dx < 0.
\end{eqnarray*}
Hence, the infimum in \eqref{variat} is negative which contradicts
our assumption $-\lambda_0 \geq 0$.
\end{proof}

\begin{remark}
\label{rem-2.1} {\rm From formula \eqref{variat-1} it is immediate
to check that, when the maximum of $R_0$ in $\Omega_{\ell}$ is less
than or equal to $1$, the constant $\lambda_0$ is non-positive.
Hence, in this situation the non-infected solution is the only
relevant steady state solution to Problem
\eqref{pde1-bis}-\eqref{pde3-bis} in complete agreement with the
case when $R_0$ is constant (see the Introduction).}
\end{remark}

The result in Lemma \ref{lemma-2.2} is very sharp as the following
theorem shows.

\begin{theorem}
\label{thm-2.4} Suppose that $\lambda_0>0$. Then, there exists a
steady state solution $(T_i,I_i,V_i)$ to Problem
\eqref{pde1-bis}-\eqref{pde3-bis} whose components are all positive
in $\overline{\Omega_{\ell}}$, with
\begin{eqnarray}
T_i=\frac{\alpha}{\gamma V_i +\mt}, \quad I_i=\frac{\gamma\alpha V_i}{\mi(\gamma V_i +\mt)}.
\label{infected-1}
\end{eqnarray}
Moreover, $(T_u,I_u,V_u)$ and
$(T_i,I_i,V_i)$ are the only steady state solutions whose components
are non-negative in $\Omega_{\ell}$.
\end{theorem}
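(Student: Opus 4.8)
The plan is to reduce the whole problem to the scalar equation \eqref{semil} and to show that, under the assumption $\lambda_0>0$, it has exactly one positive solution $V_i\in H^2_{\sharp}$; the components $T_i,I_i$ are then recovered from \eqref{infected-1} and are positive because $V_i>0$ and $\alpha>0$. A preliminary observation is that every non-negative, non-trivial solution $V$ of \eqref{semil} is in fact strictly positive: writing the equation as $d_V\Delta V-c(x)V=0$ with $c(x)=\mv-\mt\mv R_0(\gamma V+\mt)^{-1}$ bounded and measurable, using that $V\in H^2_{\sharp}\hookrightarrow C(\overline{\Omega_\ell})$ in dimension two (so $V\in W^{2,p}$ for all $p$ by elliptic regularity), and applying the strong maximum principle (after adding a large constant to $c$ to make the zero-order coefficient non-negative), one gets $V>0$ on $\overline{\Omega_\ell}$. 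Consequently, once existence and uniqueness of the positive solution are established, $V_u\equiv0$ and $V_i$ are the only non-negative solutions of \eqref{semil}, and via \eqref{ss1}--\eqref{ss2} the triplets \eqref{non-infected-1} and \eqref{infected-1} are the only non-negative steady states.

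For existence I would use the method of sub- and supersolutions. Let $\psi_0>0$ be a principal eigenfunction associated with $\lambda_0$; its strict positivity follows from Theorem~\ref{thm-2.1} (the eigenfunction does not change sign) together with the strong maximum principle applied to the eigenvalue equation. Writing the reaction term as $F(x,V)=-\mv V+\mt\mv R_0V(\gamma V+\mt)^{-1}$, a direct computation using $d_V\Delta\psi_0=\lambda_0\psi_0-\mv(R_0-1)\psi_0$ gives
\begin{equation*}
d_V\Delta(\e\psi_0)+F(x,\e\psi_0)=\e\psi_0\left(\lambda_0-\frac{\gamma\mv R_0\,\e\psi_0}{\gamma\e\psi_0+\mt}\right),
\end{equation*}
which is non-negative for $\e>0$ small since $\lambda_0>0$; hence $\underline V=\e\psi_0$ is a subsolution. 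A large constant $\overline V\equiv M$ is a supersolution, because $F(x,M)=\mv M(\mt R_0(\gamma M+\mt)^{-1}-1)\le0$ as soon as $M\ge\mt(\|R_0\|_\infty-1)/\gamma$ (and $\|R_0\|_\infty>1$ here, by Remark~\ref{rem-2.1}, since $\lambda_0>0$). Fixing such an $M$ first and then $\e$ small guarantees $\underline V\le\overline V$. The monotone iteration scheme (after adding a multiple $KV$ to both sides to render the reaction increasing on the bounded range $[\e\min\psi_0,M]$) produces a weak solution with $\e\psi_0\le V_i\le M$; bootstrapping yields $V_i\in H^2_{\sharp}$, and $V_i\ge\e\psi_0>0$.

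For uniqueness I would exploit the structural fact that $V\mapsto F(x,V)/V=-\mv+\mt\mv R_0(\gamma V+\mt)^{-1}$ is strictly decreasing on $(0,\infty)$ (here $R_0>0$ everywhere since $\alpha>0$), which is precisely the Brezis--Oswald condition. Given two positive solutions $V_1,V_2$, I would test the equation for $V_j$ with $(V_1^2-V_2^2)/V_j$, subtract, and integrate by parts (boundary terms vanish by periodicity). By a Picone-type identity the left-hand side becomes
\begin{equation*}
d_V\int_{\Omega_\ell}\left(\left|\nabla V_1-\frac{V_1}{V_2}\nabla V_2\right|^2+\left|\nabla V_2-\frac{V_2}{V_1}\nabla V_1\right|^2\right)dx\ge0,
\end{equation*}
while the right-hand side equals $\int_{\Omega_\ell}(V_1^2-V_2^2)\big(F(x,V_1)/V_1-F(x,V_2)/V_2\big)\,dx\le0$ by the monotonicity above. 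Thus both sides vanish: the vanishing of the left-hand side forces $\nabla(V_1/V_2)\equiv0$, so $V_1=cV_2$, and the vanishing of the right-hand side combined with the \emph{strict} monotonicity of $F/V$ then forces $c=1$, i.e.\ $V_1=V_2$.

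The routine parts are the integrations by parts, the Picone expansion, and the monotone-iteration bookkeeping. The genuinely delicate point is that $\alpha$, hence $R_0$ and $F$, is only piecewise continuous (bounded and measurable), so classical Schauder theory is unavailable: I must work with strong $W^{2,p}$ solutions and use the strong maximum principle for operators with bounded measurable zero-order coefficient, and I must check that the sub--supersolution scheme and the Picone identity remain valid within $H^2_{\sharp}$ under periodic boundary conditions with a merely measurable reaction term.
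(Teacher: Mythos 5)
Your proposal is correct. The existence half is essentially the paper's own argument: the same constant supersolution $\mt(\|R_0\|_\infty-1)/\gamma$, the same subsolution $\e\psi_0$ built from the principal eigenfunction (with the identical pointwise computation exploiting $\lambda_0>0$), and a monotone iteration between them; the paper observes that $t\mapsto t/(\gamma t+\mt)$ is already increasing, so your extra shift by $KV$ is unnecessary, though harmless. The uniqueness half is genuinely different. The paper adapts Keller's method: if $u$ is a second nontrivial non-negative solution, then $w=v-u$ is an eigenfunction with eigenvalue $\mv$ of the operator $d_V\Delta+\mt^2\mv R_0\,q(u,v)\,Id$, where $q(x,y)=((\gamma x+\mt)(\gamma y+\mt))^{-1}$; strict monotonicity of the top eigenvalue in the potential gives $\mv\le\lambda_{\max}(u,v)<\lambda_{\max}(u,0)$, and rewriting the equation for $u$ then produces a non-positive, nontrivial right-hand side which the Fredholm alternative would force to be orthogonal to the sign-definite principal eigenfunction of $d_V\Delta+\mt^2\mv R_0\,q(u,0)\,Id$ --- a contradiction. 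Your Brezis--Oswald/Picone route instead uses only the strict decrease of $s\mapsto F(x,s)/s$ and one integration by parts; it is more elementary and more portable (it works verbatim for any nonlinearity with $F(x,s)/s$ strictly decreasing, e.g.\ after adding a logistic term), but it carries one extra obligation that Keller's argument sidesteps: you must first prove that \emph{every} nontrivial non-negative solution is strictly positive, so that the test functions $(V_1^2-V_2^2)/V_j$ are admissible in $H^1_{\sharp}$, whereas the paper only ever needs $u\ge 0$, $u\not\equiv 0$. You correctly identify and discharge this obligation via $H^2_{\sharp}\hookrightarrow C(\overline{\Omega_{\ell}})$ in dimension two, $W^{2,p}$ regularity and the strong maximum principle for bounded measurable zero-order coefficients, so your proof is complete.
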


\begin{proof}
It is clear, that we can limit ourselves to dealing with the
equation \eqref{semil}. Being rather long, we split the proof into
two steps.

{\em Step 1: $($existence$)$}. To prove the existence of a positive
solution to the equation \eqref{semil} in $H^2_{\sharp}$, we use the
classical method of upper and lower solutions. To  simplify the
notation, we denote by ${\mathscr R}$ the sup-norm of the function
$R_0$. We look for an upper solution $\overline{v}_0$ of \eqref{semil} as
a constant $C>0$. It is immediate to check that the best choice
of
$\overline{v}_0$ is
\begin{eqnarray*}
\overline{v}_0(x) \equiv \frac{\mt({\mathscr
R}-1)}{\gamma},\qquad\;\,x\in\overline{\Omega_{\ell}}.
\end{eqnarray*}
Note that, by Remark \ref{rem-2.1}, ${\mathscr R}$ is strictly
greater than $1$.

To determine a lower solution, in the spirit of \cite[Chapt. 13,
Sec. 3]{lions}, we take as a candidate to be a lower solution the
function $\underline{v}_0=c \varphi_0$ with $c>0$ to be fixed. Here,
$\varphi_0$ is the unique (positive) solution to the equation
$d_V\Delta \varphi_0+\mu_V(R_0-1)\varphi_0=\lambda_0\varphi_0$ which
satisfies $\sup_{x\in \Omega_{\ell}}\varphi_0(x)=1$.

If we plug $c\varphi_0$ in \eqref{semil}, we get
\begin{align*}
&d_V \Delta (c\varphi_0) - \mv c\varphi_0+\mt\mv R_0\frac{c\varphi_0}{\gamma c\varphi_0+\mt}\\
=\, &\lambda_0 c \varphi_0 + \mt\mv R_0\frac{c\varphi_0}{\gamma c\varphi_0+\mu_T}-\mv R_0c\varphi_0\\
=\, &c\varphi_0 \left(\lambda_0 - \gamma\mv R_0\frac{ c\varphi_0}{\gamma c\varphi_0+\mt}\right)\\
\geq\, &c\varphi_0 \left(\lambda_0 -\frac{\gamma c\mv{\mathscr
R}}{\mu_T}\right).
\end{align*}
Since $\lambda_0 >0$ is fixed, the last side of the previous chain
of inequalities is non-negative as soon as $\lambda_0\mu_T -\gamma
c\mv{\mathscr R}\geq 0$. Hence, if we fix
\begin{eqnarray*}
c= \min \left\{\frac{\lambda_0}{\gamma\mu_V{\mathscr R}},
\frac{\mt({\mathscr R} -1)}{\gamma} \right\},
\end{eqnarray*}
the function $\underline{v}_0=c\varphi_0$ turns out to be a positive
lower solution to the equation \eqref{semil} and it satisfies
$\underline{v}_0\le\overline{v}_0$ in $\Omega_{\ell}$.

Hence, the classical method of upper and lower solutions provides us
with a positive solution to the equation \eqref{semil}. It is enough
to define the sequence $(\overline{v}_n)$ by recurrence in following
way: for any fixed $n\in\mathbb N$, $\overline{v}_n$ is the unique
solution in $H^2_{\sharp}$ of the equation
\begin{eqnarray*}
d_V\Delta\overline{v}_n-\mv \overline{v}_n +\mt\mv
R_0\frac{\overline{v}_{n-1}}{\gamma \overline{v}_{n-1} +\mt}=0.
\end{eqnarray*}
Since the function $t\mapsto h(t):=\frac{t}{\gamma t +\mt}$ is
increasing in $[0,+\infty)$, the maximum principle (see Proposition
\ref{max-princ}) shows that the sequence $(\overline{v}_n)$ is
pointwise non-increasing. Moreover,
$\underline{v}_0\le\overline{v}_n\le\overline{v}_0$ for any
$n\in\N$. Hence, the sequence $(\overline{v}_n)$ is bounded.
Moreover, by very general results for the heat equation, there exist
positive constants $C_1$ and $C_2$, independent of $n$, such that
\begin{eqnarray*}
\|\overline{v}_n\|_{H^2_{\sharp}}\le C_1\|R_0h(\overline{v}_{n-1})\|_{L^2} \le
C_2\|\overline{v}_{n-1}\|_{L^2},
\end{eqnarray*}
for any $n\in\N$. Thus, the sequence $(\overline{v}_n)$ is bounded
in $H^2_{\sharp}$ as well. Since it converges pointwise in
$\overline \Omega_{\ell}$, we can now infer that $v_n$ converges
strongly in $H^1_{\sharp}$ and weakly in $H^2_{\sharp}$ to a
function $v\in H^2_{\sharp}$ which, of course, turns out to be a
solution to the equation \eqref{semil}. For further details on the
method of lower and upper solutions, we refer the reader, e.g., to
the monograph \cite{pao}.

\emph{Step 2: $($uniqueness$)$}. To prove the uniqueness of the
nontrivial non-negative solution to the equation \eqref{semil}, we
adapt to our situation a method due to H.B. Keller \cite{keller}.

Let us suppose that $u$ is another nontrivial non-negative solution
to \eqref{semil}. Then, the function $w:=v-u$ belongs to
$H^2_{\sharp}$ and solves the equation
\begin{equation}
d_V \Delta w - \mv w+\mt^2\mv R_0q(u,v)w=0, \label{eq-w}
\end{equation}
where
\begin{eqnarray*}
q(x,y)=\frac{1}{(\gamma x+\mt)(\gamma y+\mt)},\qquad\;\,x,y\ge 0.
\end{eqnarray*}
Let us denote by $\lambda_{\max}(u,v)$ and $\lambda_{\max}(u,0)$,
the maximum eigenvalues in $L^2$ of the operators
\begin{eqnarray*}
d_V\Delta+\mt^2\mv R_0q(u,v)Id
\end{eqnarray*}
and
\begin{eqnarray*}
d_V \Delta +\mt^2\mv R_0q(u,0)Id,
\end{eqnarray*}
respectively. By
Theorem \ref{thm-2.1}, they are given by the formula
\begin{eqnarray*}
\lambda_{\max}(u,jv)=-\inf_{\psi \in H^1_{\sharp}, \psi \not\equiv
0} \left\{\frac{d_V\int_{\Omega_{\ell}}|\nabla\psi|^2 dx -
\mt^2\mv\int_{\Omega_{\ell}}R_0q(u,jv)\psi^2 dx}
{\int_{\Omega_{\ell}}\psi^2 dx} \right\},
\end{eqnarray*}
for $j=0,1$. Since $v$ is non-negative and it does not identically
vanish, $q(u,v)\le q(u,0)$ and there exists an open subset of
$\Omega_{\ell}$ where $q(u,v)<q(u,0)$. Hence,
$\lambda_{\max}(u,v)<\lambda_{\max}(u,0)$. Clearly, since $w$
satisfies \eqref{eq-w} and it does not identically vanish in
$\overline{\Omega_{\ell}}$, then
$\mv\le\lambda_{\max}(u,v)<\lambda_{\max}(u,0)$.

Let us now rewrite the equation satisfied by $u$ in the following way:
\begin{eqnarray*}
d_V\Delta u-\lambda_{\max}(u,0)u+\mt^2\mv
R_0q(u,0)u=(\mv-\lambda_{\max}(u,0))u:=\varphi.
\end{eqnarray*}
Fredholm alternative implies that $\varphi$ should be orthogonal to
$\zeta$, where by $\zeta$ we have denoted the function which spans
the eigenspace associated with the eigenvalue $\lambda_{\max}(u,0)$.
As it has been already remarked (see Theorem \ref{thm-2.1}), the
function $\zeta$ does not change sign in $\overline{\Omega_{\ell}}$.
Similarly, since $\mv<\lambda_{\max}(u,0)$, $\varphi$ is
non-positive and it does not identically vanish since $u$ does not.
Hence, the function $\varphi$ cannot be orthogonal to $\zeta$ and
this leads us to a contradiction. The proof is now complete.
\end{proof}
\subsection{Numerical illustration (steady state)}\label{numerics1}
In accordance with Funk et al. \cite{funk},  the domain
$\Omega_{\ell}$ is a discrete square grid with $n \times n$ sites of equal
dimension $\ell/n \times \ell/n$. We assume in this numerical part
that all parameters vary randomly from site to site in such a way
that $0.1 \leq R_0(x) \leq 5.0$. We deal with two cases:
\begin{enumerate} [(i)]
\item  In the first case, see Fig. \ref{eqntr} (left),
the distribution of $R_0$ is as in Fig. \ref{R012}a. The sources
represent only $26\%$ of the sites. We compute $\lambda_0 = -1.70$.
Solving numerically the equation \eqref{semil}, we find the
non-infected solution $V_u\equiv 0$. We also represent $T_u$
according to Formula \eqref{non-infected-1}. \vskip -3truemm
\begin{figure}[htp]
\begin{center}
\begin{tabular}{cc}
\epsfig{figure=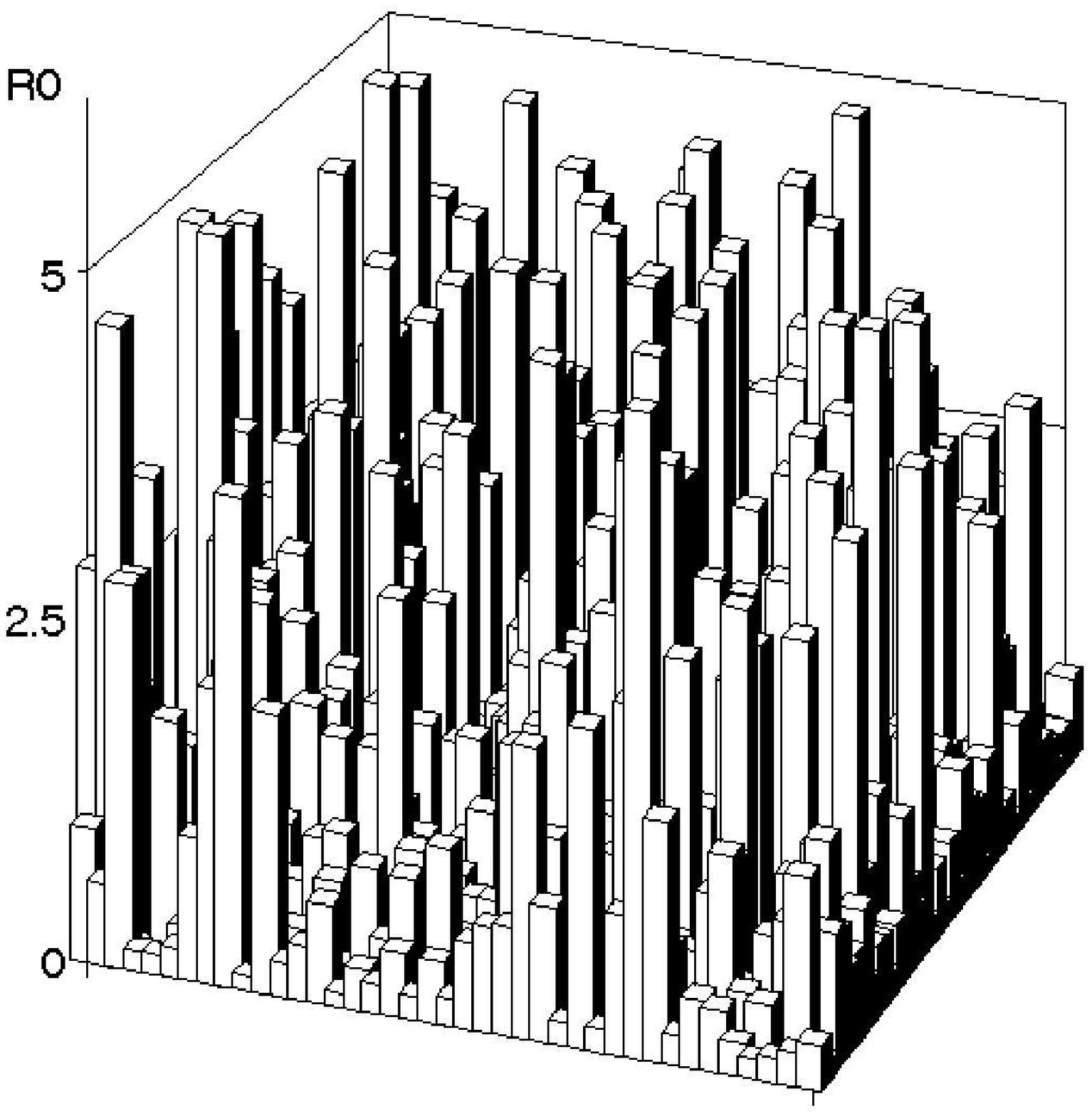,width=7cm} &\!\!\!\!\!\!\!\! \epsfig{figure=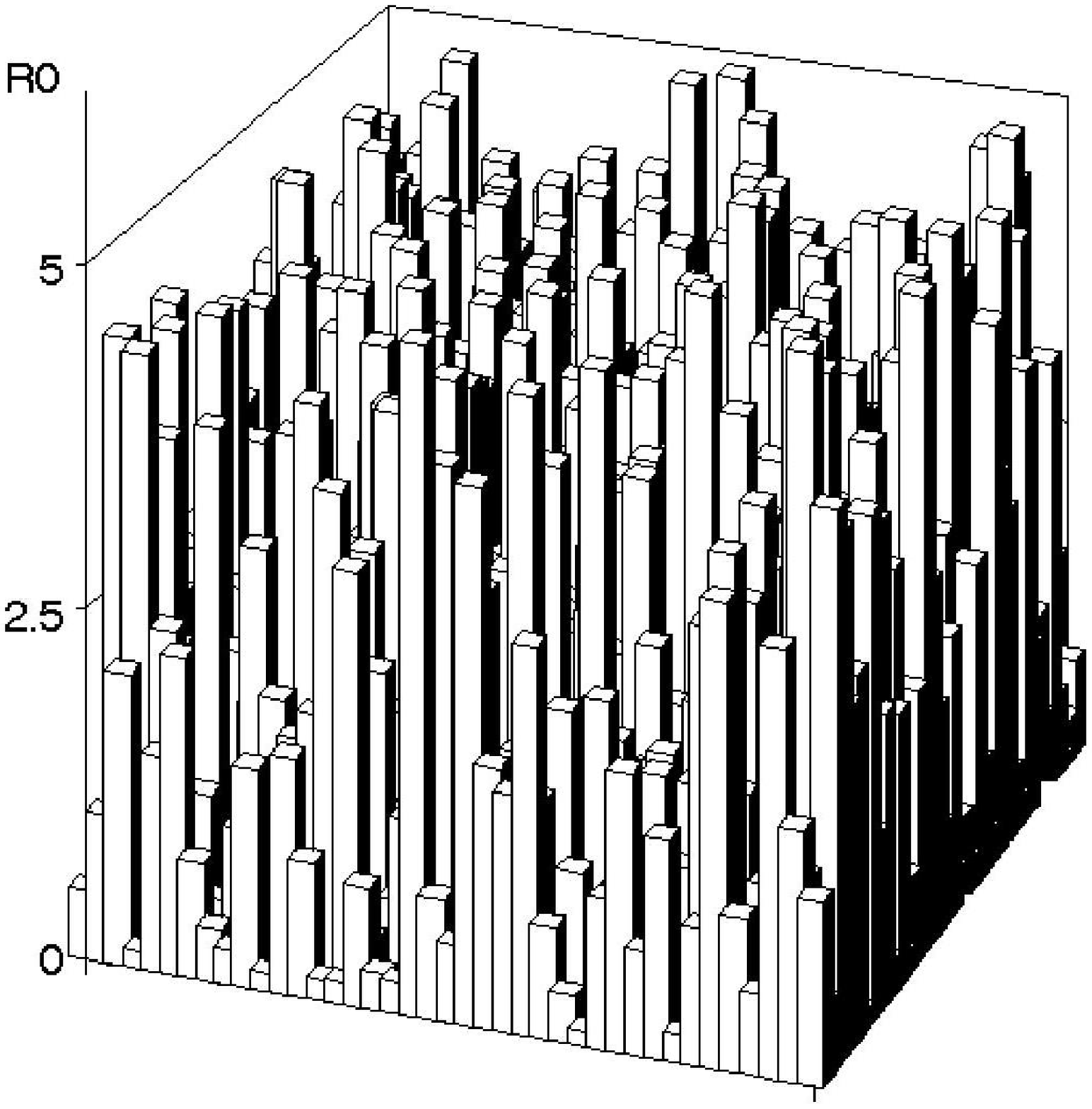,width=7cm}\\
(a) & (b)\\
\end{tabular}
\end{center}
 \caption{Two distributions of $R_0$ on a $40 \times 40$ grid. (a): $26\%$
 of the sites are sources; (b): $50\%$ of the sites are sources. \label{R012}}
\end{figure}

\item In the second case, see Fig. \ref{eqntr} (right), the distribution of $R_0$ is
as in Fig. \ref{R012}b. Now the sources represent half of the sites.
The eigenvalue $\lambda_0 = 4.30$ is positive. Numerically, we
observe the positive infected solution $V_i$ of \eqref{semil}. Note that
$V_i$ is smoothly structured in space although $R_0$ is not.
We also represent $T_i$ according to Formula \eqref{infected-1}.
\begin{figure}[ntp]
\begin{center}
\epsfig{file=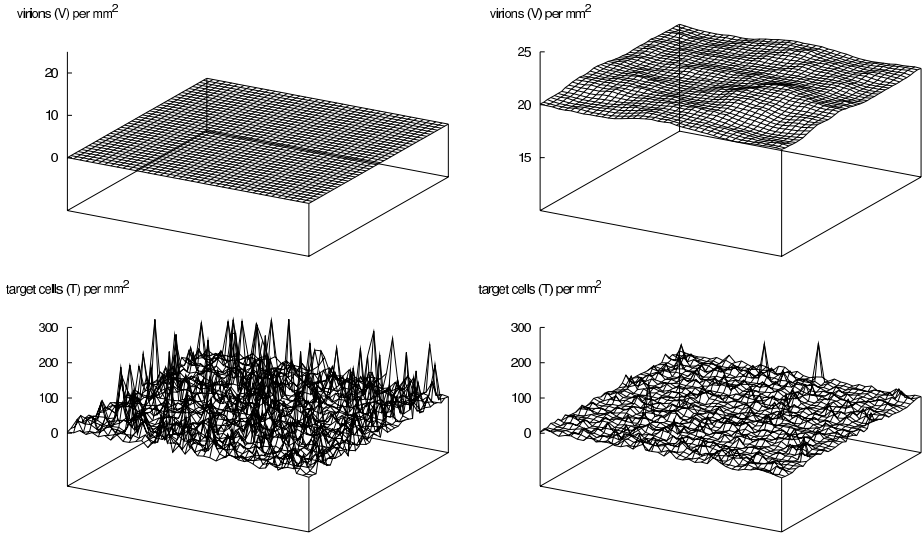,scale=0.55}
\end{center}
\caption{Densities of virus $V$ (top) and target cells $T$ (bottom).
Left: $R_0$ as in Fig. \ref{R012}a, $\lambda_0<0$ (no infection).
Right: $R_0$ as in Fig. \ref{R012}b, $\lambda_0>0$ (infection). Here
$d_V=1,~\ell=1,~N=40$.} \label{eqntr}
\end{figure}%
\end{enumerate}
\section{Study of the dynamical system}
\label{sec-3}
\setcounter{equation}{1} We recall the evolution
problem for the virus dynamics:
\begin{align}
&T_t=\alpha - \gamma V T - \mu_T T,\tag{\theequation a}\label{pde1-ter}\\
&I_t= \gamma V T - \mu_I I,\tag{\theequation b}\label{pde2-ter}
\\
&V_t=N \mi I- \mu_V V + d_V \Delta V, \tag{\theequation
c}\label{pde3-ter}
\end{align}
set in $\Omega_{\ell}$ with periodic boundary conditions. We
consider \eqref{pde1-ter}-\eqref{pde3-ter} as a dynamical system
${\mathscr S}(t)$, which has two equilibria: the non-infected
trivial solution $(T_u,I_u,V_u)$ and the infected, positive solution
$(T_i,I_i,V_i)$, the latter for $\lambda_0>0$ only. At first, we
prove that the non-infected solution is stable for $\lambda_0<0$ and
unstable for $\lambda_0>0$. By stable we mean {\em asymptotically}
stable. For $\lambda_0>0$ the instability of $(T_u,I_u,V_u)$ does
not usually imply the stability of $(T_i,I_i,V_i)$. Our aim is to
prove the existence of a universal (or maximal) attractor which
attracts all the orbits (see e.g., \cite{temam}). Since System
\eqref{pde1-ter}-\eqref{pde3-ter} is only partly dissipative, we
will use a result of Marion \cite{marion}. Some special cases where
the stability of the infected solution is granted will be discussed
afterwards.

Let us introduce the following notations:
${\mathscr D}$ is the domain in $\R^3$ defined by:
\begin{eqnarray*}
{\mathscr D}=\{(x,y,z)\in\R^3: 0\leq x+y\leq M_1,\; 0\leq z\leq
M_2\},
\end{eqnarray*}
where $M_1$, $M_2$ are positive constants which will be fixed
throughout the proof of the next theorem. We also set for ${\bf
u}:=(u_1,u_2,u_3)$:
\begin{eqnarray*}
{\bf H}= \{{\bf u} \in (L^2)^3:\ {\bf u}(x) \in {\mathscr D}
\;\,\mbox{for a.e.}\; x \in \Omega_{\ell}\},\quad {\bf V}= L^2
\times L^2  \times H_{\sharp}^1.
\end{eqnarray*}
Our main result is the following theorem.
\begin{theorem}\label{stab-lambda_0-positive}
The following properties are met:
\begin{enumerate}[\rm (i)]
\item
if $\lambda_0<0$, the trivial non-infected solution to Problem
\eqref{pde1-ter}-\eqref{pde3-ter} is stable;
\item
if $\lambda_0>0$, the trivial non-infected solution to Problem
\eqref{pde1-ter}-\eqref{pde3-ter} is unstable;
\item
if $\lambda_0>0$ and $\alpha\in W^{1,\infty}(\Omega_{\ell})$, the
dynamical system ${\mathscr S}(t)$ associated with
\eqref{pde1-ter}-\eqref{pde3-ter} possesses a universal attractor
that is connected in {\bf H}.
\end{enumerate}
\end{theorem}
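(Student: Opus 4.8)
The plan is to treat the three assertions separately, handling (i) and (ii) together by a spectral analysis of the linearization around the non-infected equilibrium, and (iii) by the theory of attractors for partly dissipative systems.

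For (i) and (ii), I would linearize \eqref{pde1-ter}-\eqref{pde3-ter} around $(T_u,I_u,V_u)=(\alpha/\mt,0,0)$. Setting $T=T_u+\tilde T$, $I=\tilde I$, $V=\tilde V$ and discarding the quadratic terms, the linearized operator is block--triangular: since $V_u\equiv 0$, the coefficient of $\tilde T$ in the $\tilde I$ and $\tilde V$ equations vanishes, so $\tilde T$ decouples downstream and solves $\tilde T_t=-\mt\tilde T-\g T_u\tilde V$, while $(\tilde I,\tilde V)$ obeys the cooperative system
\begin{align*}
\tilde I_t&=\frac{\mv R_0}{N}\,\tilde V-\mi\tilde I,\\
\tilde V_t&=N\mi\,\tilde I-\mv\tilde V+d_V\Delta\tilde V,
\end{align*}
where I used $\g T_u=\g\alpha/\mt=\mv R_0/N$. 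As $-\mt<0$, stability is governed entirely by this cooperative block. Seeking eigenvalues $\sigma$ with eigenfunction $(\phi,\psi)$ and eliminating $\phi=\frac{\mv R_0/N}{\sigma+\mi}\psi$ reduces the problem to finding $\sigma$ for which
\begin{equation*}
d_V\Delta\psi+\Big(\frac{\mi\mv R_0}{\sigma+\mi}-\mv-\sigma\Big)\psi=0
\end{equation*}
has a nontrivial solution.

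Introduce $g(\sigma)$, the largest eigenvalue (furnished by Theorem \ref{thm-2.1}) of the operator $\psi\mapsto d_V\Delta\psi+\big(\tfrac{\mi\mv R_0}{\sigma+\mi}-\mv-\sigma\big)\psi$, defined for $\sigma>-\mi$. The key observation is that at $\sigma=0$ the coefficient equals $\mv R_0-\mv=\mv(R_0-1)$, so $g(0)=\lambda_0$. Moreover, as $\sigma$ increases the coefficient decreases pointwise, whence by the variational formula \eqref{variat} the function $g$ is strictly decreasing. The spectral bound $\sigma_\ast$ of the cooperative block is the unique root of $g(\sigma_\ast)=0$, and it is real and dominant because the system is cooperative (Krein--Rutman). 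Consequently $\operatorname{sign}\sigma_\ast=\operatorname{sign}\lambda_0$: if $\lambda_0<0$ then $\sigma_\ast<0$, the whole spectrum lies in $\{\operatorname{Re}\sigma<0\}$, and the principle of linearized stability yields asymptotic stability; if $\lambda_0>0$ then $\sigma_\ast>0$ produces a growing mode, hence instability. This proves (i) and (ii).

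For (iii) I would follow the standard route for global attractors, exploiting that the system is \emph{partly dissipative} (only $V$ diffuses). First, well-posedness: treating the $V$--equation through its analytic semigroup and $T,I$ as ODEs, a fixed-point and bootstrap argument yields a global continuous semiflow $\mathscr S(t)$. Next, a bounded absorbing set: the maximum principle gives $T,I,V\ge0$, while $(T+I)_t\le\|\alpha\|_\infty-\min\{\mt,\mi\}(T+I)$ drives $T+I$ into $[0,M_1]$ and then $V_t-d_V\Delta V\le N\mi M_1-\mv V$ drives $V$ into $[0,M_2]$; choosing $M_1,M_2$ accordingly makes $\mathbf H$ positively invariant and absorbing, establishing dissipativity in $\mathbf H$.

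The heart of the proof, and the step I expect to be the main obstacle, is \emph{asymptotic compactness}, since the $T,I$ components carry no smoothing of their own. For $V$, parabolic regularity applied to the bounded source $N\mi I\in L^2$ gives a uniform $H^2_{\sharp}$ bound for large $t$, hence relative compactness of the $V$--component in $H^1_{\sharp}$. For $T$ and $I$, I would invoke the hypothesis $\alpha\in W^{1,\infty}(\Omega_{\ell})$: differentiating the two ODEs in space and using the linear damping $-\mt,-\mi$ together with the just-obtained control of $\nabla V$ yields uniform $H^1$ bounds on $T,I$, so that $(T,I)$ lies in a compact subset of $L^2\times L^2$. With these estimates, the decomposition underlying Marion's theorem \cite{marion} applies and gives asymptotic compactness of $\mathscr S(t)$; the universal attractor is then the $\omega$--limit set of the absorbing set. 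Finally, connectedness follows because $\mathscr D$ is convex, so $\mathbf H$ and the absorbing set are connected, and the global attractor of a continuous semiflow on a connected space is connected.
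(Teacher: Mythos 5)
Your overall architecture matches the paper's: linearize at $(T_u,I_u,V_u)$, reduce the eigenvalue problem to a scalar equation for the $V$-component, use the monotone family of principal eigenvalues $s\mapsto\lambda(s)$ to produce a positive eigenvalue when $\lambda_0>0$ (your $g(\sigma)=\lambda(\sigma)-\sigma$ is exactly the paper's fixed-point equation $s=\lambda(s)$), and invoke Marion's theorem for the attractor. The genuine divergence is in part (i): to exclude \emph{complex} spectrum in the closed right half-plane, the paper multiplies the reduced equation by $\overline{\varphi_3}$, integrates, takes real parts and uses the variational characterization \eqref{variat-psi}, which is elementary and self-contained. You instead argue that the $(I,V)$ block is cooperative, so the spectral bound is real and attained at $\sigma_\ast$. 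That route is viable, but your citation of Krein--Rutman is not quite the right tool: the resolvent of the $(I,V)$ block is \emph{not} compact (the $I$-equation has no smoothing, and indeed $-\mi$ sits in the essential spectrum), so you need the theorem that the spectral bound of the generator of a positive $C_0$-semigroup on $L^p$ lies in the spectrum (and equals the growth bound). With that reference supplied, your argument closes; without it, the dominance of the real eigenvalue over possible complex ones is exactly the point the paper's computation is designed to settle.

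In part (iii) your structure (local existence by fixed point, positivity, the invariant region ${\mathscr D}$ with the same $M_1,M_2$ via the Svab--Zeldovich variable $T+I$, then Marion) is the paper's, but your proposed mechanism for asymptotic compactness is shaky: differentiating the $T$ and $I$ ODEs in space yields uniform $H^1$ bounds only if $\nabla u_{0,1},\nabla u_{0,2}\in L^2$, and the ODE components never smooth rough $L^2$ data, so $(T,I)$ trajectories are \emph{not} relatively compact in $L^2\times L^2$ for general initial data in ${\bf H}$. This is precisely the difficulty that Marion's partly-dissipative framework is built to circumvent (compactness of the ODE components is recovered only on the attractor, via the backward-in-time integral representation, not along all orbits). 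The paper accordingly does not attempt your estimate: it verifies Marion's structural hypothesis (4.6) on the non-diffusive block $G(\cdot,u_3)$ and delegates compactness entirely to \cite[Thm. 5.1]{marion}. Since you too ultimately defer to Marion, your conclusion stands, but the "heart of the proof" you identify is not an argument you should (or can) carry out yourself in the stated generality.
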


\subsection{Proof of (i)}
We begin the proof observing that the linearization (around
$(T_u,I_u,V_u)$) of Problem \eqref{pde1-ter}-\eqref{pde3-ter} is
associated with the linear operator ${\mathscr L}_u$ defined by
\begin{eqnarray*}
{\mathscr L}_u=
\begin{pmatrix}
- \mu_T Id & 0 & -\displaystyle\frac{\gamma\alpha}{\mu_T}Id \\[3.5mm]
0 &- \mu_I Id & \displaystyle\frac{\gamma\alpha}{\mu_T}Id \\[3.5mm]
0 & N \mi Id & d_V\Delta-\mv Id
\end{pmatrix}.
\end{eqnarray*}
Its realization $L_u$ in $(L^2)^3$ with domain $D(L_u)=L^2\times
L^2\times H^2_{\sharp}$ generates an analytic strongly continuous
semigroup. Indeed, $L_u$ is a bounded perturbation of the diagonal
operator
\begin{eqnarray*}
\begin{pmatrix}
- \mu_T Id & 0 & 0 \\[3.5mm]
0 &- \mu_I Id & 0 \\[3.5mm]
0 & 0 & d_V\Delta-\mv Id
\end{pmatrix},
\end{eqnarray*}
defined in $L^2\times L^2\times H^2_{\sharp}$, which is clearly
sectorial since all its entries are. Hence, we can apply \cite[Prop.
2.4.1(i)]{lunardi} and conclude that $L_u$ is sectorial. Since
$H^2_{\sharp}$ is dense in $L^2$, the associated analytic semigroup
is strongly continuous.

Let us prove that all the elements of the spectrum of $L_u$ have
negative real part. In view of the linearized stability principle
(see e.g., \cite[Chapt. 5, Cor. 5.1.6]{henry}) this will imply that
the trivial non-infected solution to Problem
\eqref{pde1-ter}-\eqref{pde3-ter} is stable.

To study the spectrum of the operator $L_u$, we fix ${\bf
f}=(f_1,f_2,f_3)\in (L^2)^3$ and consider the resolvent system
\setcounter{equation}{2}
\begin{align}
&(\lambda+\mu_T)\varphi_1+\frac{\alpha\gamma}{\mu_T}\varphi_3=f_1,\tag{\theequation
a}
\label{spectr-1}\\
&(\lambda+\mu_I)\varphi_2-\frac{\alpha\gamma}{\mu_T}\varphi_3=f_2,\tag{\theequation
b}
\label{spectr-2}\\
&(\lambda+\mu_V)\varphi_3-d_V\Delta
\varphi_3-N\mu_I\varphi_2=f_3,\tag{\theequation c} \label{spectr-3}
\end{align}
where we look for a triplet of functions $\varphi_1,\varphi_2\in
L^2$ and $\varphi_3\in H^2_{\sharp}$. Suppose that $\lambda$ differs
from both $-\mu_I$ and $-\mu_T$ (which belong to the essential
spectrum). Then, we can use equations \eqref{spectr-1} and
\eqref{spectr-2} to make $\varphi_1$ and $\varphi_2$ explicit in
terms of $\varphi_3$. In particular, replacing the expression of
$\varphi_2$ in terms of $\varphi_3$ into \eqref{spectr-3} and using
the very definition of the function $R_0$ (see \eqref{R0-def-1}), we
can transform Problem \eqref{spectr-1}-\eqref{spectr-3} into the
equivalent equation for $\varphi_3$ only:
\begin{equation}
(\lambda+\mu_V)\varphi_3-d_V\Delta
\varphi_3-\frac{\mu_I\mu_V}{\lambda+\mu_I}R_0\varphi_3=\frac{N\mu_I}{\lambda+\mu_I}f_2
+f_3. \label{eq-v3-0}
\end{equation}
Adding and subtracting $-\mu_VR_0$ from the left-hand side of
\eqref{eq-v3-0}, we can rewrite the equation \eqref{eq-v3-0} into
the equivalent form:
\begin{equation}
(\lambda+\mu_V)\varphi_3-d_V\Delta
\varphi_3-\mu_VR_0\varphi_3+\mu_VR_0\frac{\lambda}{\lambda+\mu_I}\varphi_3=\frac{N\mu_I}{\lambda+\mu_I}f_2
+f_3. \label{eq-v3}
\end{equation}
Note that, for any $\lambda\in\C$, the operator $A_{\lambda}$
defined by the left-hand side of \eqref{eq-v3} has compact
resolvent. Hence, its spectrum consists of eigenvalues only. We are
going to prove that, for $\lambda\in\C$ with non-negative real part,
$0$ is not an eigenvalue of $A_{\lambda}$. For this purpose, we
observe that $-\lambda_0$ can be equivalently characterized as the
infimum of the ratio
\begin{eqnarray*}
\frac{d_V\int_{\Omega_{\ell}}|\nabla\psi|^2 dx +
\mv\int_{\Omega_{\ell}}(1-R_0)|\psi|^2 dx}
{\int_{\Omega_{\ell}}|\psi|^2dx}
\end{eqnarray*}
when $\psi$ runs in the set of all the complex-valued functions
$\psi=\psi_1+i\psi_2$, with $\psi_1,\psi_2\in H^1_{\sharp}$. This
shows, in particular, that
\begin{equation}
-\lambda_0\int_{\Omega_{\ell}}|\psi|^2dx\le
d_V\int_{\Omega_{\ell}}|\nabla\psi|^2dx
+\mu_V\int_{\Omega_{\ell}}(1-R_0)|\psi|^2dx, \label{variat-psi}
\end{equation}
for any function $\psi$ as above.

Let now $\varphi_3$ be a complex-valued solution to \eqref{eq-v3}.
Multiplying both the sides of such an equation by the conjugate of
$\varphi_3$ and integrating by parts, we easily see that
\begin{align}
(\lambda&+\mu_V)\int_{\Omega_{\ell}}|\varphi_3|^2dx+
d_V\int_{\Omega_{\ell}}|\nabla
\varphi_3|^2dx-\mu_V\int_{\Omega_{\ell}}R_0|\varphi_3|^2dx\notag\\
&+\frac{\mu_V\lambda}{\lambda+\mu_I}\int_{\Omega_{\ell}}R_0|\varphi_3|^2dx=0.
\label{eq-Re-l}
\end{align}
Taking the real part of both the sides of \eqref{eq-Re-l} and using
\eqref{variat-psi}, we obtain
\begin{eqnarray*}
({\rm
Re}\lambda+\mu_V)\int_{\Omega_{\ell}}|\varphi_3|^2dx-\lambda_0\int_{\Omega_{\ell}}|\varphi_3|^2dx
+\mu_V\frac{|\lambda|^2+\mu_I{\rm
Re}\lambda}{|\lambda+\mu_I|^2}\int_{\Omega_{\ell}}|\varphi_3|^2dx
\le 0.
\end{eqnarray*}
Since, by assumptions, $\lambda_0<0$ and $R_0$ is a positive-valued
function, the only solution to the previous inequality, when ${\rm
Re}\lambda\ge 0$, is the trivial function $\varphi_3\equiv 0$.
Hence, for these values of $\lambda$, $0$ is not an eigenvalue of
the operator $A_{\lambda}$, i.e., any $\lambda$ with non-negative
real part belongs to the resolvent set of the operator $L_u$.

\subsection{Proof of (ii)}
Again in view of the linearized stability principle, to prove the
instability of non-infected solution $(T_u,I_u,V_u)$ we can limit
ourselves to showing that $L_u$ admits a positive eigenvalue. Hence,
we are led to the study of Problem \eqref{spectr-1}-\eqref{spectr-3}
with $f_1\equiv f_2\equiv f_3\equiv 0$. Since the parameters
$\mu_T$, $\mu_I$ and $\mu_V$ are all positive and we are looking for
positive eigenvalues $\lambda$, we can limit ourselves, as in the
proof of (i), to studying the equation
\begin{eqnarray*}
d_V\Delta\varphi_3 +\mv\left (\frac{\mi R_0}{\lambda+\mi}-1\right )
\varphi_3= \lambda\varphi_3.
\end{eqnarray*}

For any $s\ge 0$, let us consider the operator $d_V\Delta +\mv\left
(\frac{\mi R_0}{s+\mi}-1\right )Id$ defined in $H^2_{\sharp}$. By
Theorem \ref{thm-2.1} its spectrum consists of eigenvalues only, and
the largest one is given  by the following formula:
\begin{eqnarray*}
-\lambda(s) = \inf_{\psi \in H^1_{\sharp}, \psi \not\equiv 0}
\left\{\frac{d_V\int_{\Omega_{\ell}}|\nabla\psi|^2 dx +\mv
\int_{\Omega_{\ell}}\left (1-\frac{\mi R_0}{s+\mi}\right )\psi^2 dx}
{\int_{\Omega_{\ell}}\psi^2 dx} \right\}.
\end{eqnarray*}
As it is immediately seen, $\lambda(0)=\lambda_0$. Moreover, since
the function $s\mapsto \frac{\mi R_0}{s+\mi}$ is continuous,
decreasing in $[0,+\infty)$ and it tends to $0$ as $s\to +\infty$,
the function $s\mapsto\lambda(s)$ is continuous, decreasing and
tends to
\begin{eqnarray*}
\lambda(+\infty) = -\inf_{\psi \in H^1_{\sharp}, \psi \not\equiv 0}
\left\{\frac{d_V\int_{\Omega_{\ell}}|\nabla\psi|^2 dx +\mv
\int_{\Omega_{\ell}}\psi^2 dx} {\int_{\Omega_{\ell}}\psi^2 dx}
\right\},
\end{eqnarray*}
as $s\to +\infty$, i.e., it converges to the largest eigenvalue of
the operator $d_V\Delta-\mu_V Id$, which clearly is $-\mu_V$. Now,
since $s\mapsto\lambda(s)$ is a decreasing continuous function
mapping $[0,+\infty)$ into $[-\mv,\mv^{-1}\lambda_0]$, it is
immediate to check that the fixed point equation $s=\lambda(s)$ has
a positive solution. Of course, this fixed point is the positive
eigenvalue of the operator $L_u$ we were looking for. This
accomplishes the proof.
\endproof

\subsection{Proof of (iii)}

Let us show that \cite[Thm. 5.1]{marion} applies. In this respect
the assumption $\alpha\in W^{1,\infty}(\Omega_{\ell})$ is enough. To
avoid conflict with notations, throughout the proof, $T$ denotes
time as usual, whereas the triplet $(T,I,V)$ is denoted by
$(u_1,u_2,u_3)$. We split the proof into several steps.

{\em Step 1.} Here, we prove that, for any ${\bf
u_0}:=(u_{0,1},u_{0,2},u_{0,3}) \in (L^2)^3$, the Cauchy problem
\begin{equation}
\left\{
\begin{array}{ll}
D_tu_1(t,\cdot)=-\mu_Tu_1(t,\cdot)-\gamma
u_1(t,\cdot)u_3(t,\cdot)+\alpha(\cdot),
& t>0,\\[2mm]
D_tu_2(t,\cdot)=-\mu_Iu_2(t,\cdot)+\gamma
u_1(t,\cdot)|u_3(t,\cdot)|,
& t>0,\\[2mm]
D_tu_3(t,\cdot)=\Delta u_3(t,\cdot)-\mu_Vu_3(t,\cdot)+N\mu_I
u_2(t,\cdot),\q & t>0,\\[2mm]
u_i(0,\cdot)=u_{0,i},\;\; i=1,2,3,
\end{array}
\right. \label{pb-u2-u3}
\end{equation}
Problem \eqref{pb-u2-u3} admits a unique classical solution defined
in some time domain $(0,T_*)$. Here, by classical solution, we mean
a vector valued function ${\bf u}$ such that $u_1,u_2\in
C^1((0,T_*);L^2)\cap C([0,T_*);L^2)$ and $u_3\in C([0,T_*);L^2)\cap
C^1((0,T_*);L^2) \cap C((0,T_*);H^2_{\sharp})$.

Problem \eqref{pb-u2-u3} is semilinear with a nonlinear term which
is a continuous function from $L^2\times L^2\times
D_{\Delta_2}(\beta,2)$ into $L^2\times L^2\times L^2$ for any $\beta
\in (1/2,1)$. Here, $D_{\Delta_2}(\beta,2)$ is the interpolation
space of order $(\beta,2)$ between $L^2$ and the domain of the
realization $\Delta_2$ of the Laplacian with periodic boundary
conditions in $L^2$ (i.e., $D(\Delta_2)=H^2_{\sharp}$). Hence,
$D_{\Delta_2}(\beta,2)=(L^2,H^2_{\sharp})_{\beta,2}$ and this latter
space coincides with $H^{2\beta}_{\sharp}$, which continuously
embeds into the space $C_{\sharp}$ of all continuous and periodic
(with period $\ell$ in each variable) functions (see e.g.,
\cite[Thm. 1.4.4.1]{grisvard}).

To prove the existence of a classical solution to Problem
\eqref{pb-u2-u3}, let us fix $\beta \in (1/2,1)$ and introduce, for
any $T>0$, the space $C_{\beta}(T)$ consisting of all functions
$v:(0,T)\to D_{\Delta_2}(\beta,\infty)$ such that
$\|v\|_{C_{\beta}(T)}:=\sup_{t\in
(0,T]}\|t^{\beta}v(t,\cdot)\|_{D_{\Delta_2}(\beta,\infty)}<+\infty$.
Clearly, $C_{\beta}(T)$ is a Banach space when endowed with the
above norm. Moreover, $D_{\Delta_2}(\beta,\infty)$ is continuously
embedded into $D_{\Delta_2}(\beta-\varepsilon,2)$ for any
$\varepsilon\in (0,\beta)$. We now fix $\varepsilon$ small enough
such that $\theta=\beta-\varepsilon>1/2$. From the above results, it
is immediate to infer that $C_{\beta}(T)$ is embedded into the set
of all continuous functions $f:(0,T]\times\overline
\Omega_{\ell}\to\R$ and there exists a positive constant $C_{1}$,
independent of $T$, such that
\begin{equation}
\sup_{t\in (0,T]}t^{\theta}\|f(t,\cdot)\|_{\infty}\le
C_1\|f\|_{C_{\beta}(T)}. \label{crucial-estim}
\end{equation}

Let us solve the Cauchy problem for $u_1$, taking $u_3$ as a
parameter. The (unique) solution to such a problem in
$C^1((0,T];L^2)\cap C([0,T];L^2)$ is the function $u_1$ defined by
\begin{align}
u_1(t,\cdot)=&\exp\left (-\mu_Tt-\gamma\int_0^tu_3(s,\cdot)ds\right
)u_{0,1}\notag\\
&+\alpha(\cdot)\int_0^t\exp\left
(-\mu_T(t-s)-\gamma\int_s^tu_3(r,\cdot)dr\right )ds,
\label{op-Lambda-1}
\end{align}
for any $t\in (0,T]$. If $u_{0,1}$ is bounded and continuous in
$\Omega_{\ell}$ this result is straightforward. In the general case,
we approximate $u_{0,1}\in L^2$ by a sequence of smooth functions
$u_{0,1}^{(n)}$. It is immediate to check that the function
$u_1^{(n)}$ defined by \eqref{op-Lambda-1}, with $u_{0,1}^{(n)}$
instead of $u_{0,1}$, converges to the function $u_1$ in
$\eqref{op-Lambda-1}$ in $C([0,T];L^2)$, by dominated convergence.
Similarly, $D_tu_1^{(n)}$ converges to $D_tu_1$ in
$C([\varepsilon,T-\varepsilon];L^2)$ for any $\varepsilon>0$. It
follows that the function $u_1$ in \eqref{op-Lambda-1} is a solution
to the Cauchy problem for $u_1$ also in the case when $u_{0,1}$ is
in $L^2$.

Let us now denote by $\Lambda_1$ the operator defined in
$C_{\beta}(T)$ by the right-hand side of \eqref{op-Lambda-1}. A very
easy computation shows that $\Lambda_1$ maps $C_{\beta}(T)$ into
$C^{1-\theta}([0,T];L^2)$. Moreover, \setcounter{equation}{10}
\begin{align}
&\|\Lambda_1(v)\|_{C([0,T];L^2)}\le \exp\left
(\frac{C_1}{1-\theta}\gamma T^{1-\theta}\|v\|_{C_{\beta}(T)}\right
)\left (\|u_{0,1}\|_{L^2}+T\|\alpha\|_{L^2}\right
),\tag{\theequation a}
\label{sup-norm-Lambda}\\[2mm]
&\|\Lambda_1(v)-\Lambda_1(w)\|_{C([0,T];L^2)}\notag\\
 \le &\exp\left
(\frac{C_1}{1-\theta}\gamma
T^{1-\theta}\max(\|v\|_{C_{\alpha}(T)},\|w\|_{C_{\beta}(T)}) \right
)\notag\\
&\qquad\qquad\qquad\times\left
(1+\frac{C_1}{(1-\theta)(2-\theta)}\gamma
T^{2-\theta}\|\beta\|_{L^2}\right
)\|w-v\|_{C_{\beta}(T)}.\tag{\theequation b} \label{sup-contr}
\end{align}

We now consider the equation for $u_2$. Replacing
$u_1=\Lambda_1(u_3)$ in the right-hand side of this equation and
using the same argument as above, we easily see that the (unique)
solution in $C([0,T];L^2)\cap C^1((0,T];L^2)$ is the function $u_2$
defined by
\begin{equation}
u_2(t,\cdot)=e^{-t\mu_I}u_{0,2}+\gamma\int_0^te^{-\mu_I(t-s)}|u_3(s,\cdot)|(\Lambda_1(u_3))(s,\cdot)ds,
\qquad\;\,t\in [0,T].
\label{operat-Lambda2}
\end{equation}

Let us denote by $\Lambda_2$ the operator defined in $C_{\beta}(T)$
by the right-hand side of \eqref{operat-Lambda2}. Taking
\eqref{crucial-estim}, \eqref{sup-norm-Lambda} and \eqref{sup-contr}
into account, one can easily show that
\setcounter{equation}{12}
\begin{align}
&\|\Lambda_2(v)\|_{C([0,T];L^2)}\le
\|u_{0,2}\|_{L^2}+\frac{C_1}{1-\theta}\gamma
T^{1-\theta}\|v\|_{C_{\beta}(T)}\left (\|u_{0,1}\|_{L^2}+T\|\alpha\|_{L^2}\right )\notag\\
&\qquad\qquad\qquad\qquad\qquad\qquad\qquad\qquad\times  \exp\left
(\frac{C_1}{1-\theta}\gamma T^{1-\theta}\|v\|_{C_{\beta}(T)}\right
),\tag{\theequation a}
\label{sup-norm-Lambda-1}\\[2mm]
&\|\Lambda_2(v)\|_{C^{1-\theta}([0,T];L^2)}\le \left (
\frac{1}{\mu_I}T^{\theta}+1\right )\|u_{0,2}\|_{L^2}\notag\\
&\qquad\qquad\qquad\qquad +
\frac{C_1}{1-\theta}\gamma(1+T^{1-\theta})\|v\|_{C_{\beta}(T)}
\exp\left (\frac{C_1}{1-\theta}\gamma
T^{1-\theta}\|v\|_{C_{\beta}(T)}\right )\notag\\
&\qquad\qquad\qquad\qquad\qquad\qquad\times\left
(\|u_{0,1}\|_{L^2}+T\|\alpha\|_{L^2}\right ),\tag{\theequation b}
\label{C-alpha-Lambda-1}\\[2mm]
&\|\Lambda_2(w)-\Lambda_2(v)\|_{C([0,T];L^2)}\notag\\
\le& \frac{C_1}{1-\theta}\gamma T^{1-\theta}\exp\left
(\frac{C_1}{1-\theta}\gamma T^{1-\theta}\max
(\|v\|_{C_{\beta}(T)},\|w\|_{C_{\beta}(T)})\right )\|w-v\|_{C_{\beta}(T)}\notag\\
&\qquad\times \left \{\|u_{0,1}\|_{L^2}+T\|\theta\|_{L^2}
+\|v\|_{C_{\beta}(T)}\left (1+
\frac{C_1}{(1-\theta)(2-\theta)}\gamma
T^{2-\theta}\|\beta\|_{L^2}\right )\right \}, \tag{\theequation
c}\label{sup-contr-1}
\end{align}
for any $v,w\in C_{\beta}(T)$. Let us now observe that
$(u_1,u_2,u_3)$ is a classical solution to Problem \eqref{pb-u2-u3}
if and only if $u_3$ is a fixed point of the operator $\Gamma$,
formally defined by
\begin{eqnarray*}
(\Gamma(v))(t,\cdot)=e^{-t\mu_V}e^{t\Delta_2}u_{0,3}
+\int_0^te^{-\mu_V(t-s)}e^{(t-s)\Delta_2}(\Lambda_2(v))(s,\cdot)ds,\qquad\,t\in
(0,T).
\end{eqnarray*}

We are going to prove that the operator $\Gamma$ is a contraction in
${\mathscr Y}_{\beta}(T)=\{u\in C_{\beta}(T):
\|u\|_{C_{\beta}(T)}\le M\}$ provided that $T$ and $M$ are properly
chosen. As a first step, let us prove that $\Gamma$ maps ${\mathscr
Y}_{\beta}(T)$ into itself if $T,M$ are suitably chosen. For this
purpose, we set
\begin{eqnarray*}
K:=\sup_{t>0}\|t^{\beta}e^{t\Delta_2}\|_{D_{\Delta_2}(\beta,\infty)}.
\end{eqnarray*}
Taking \eqref{sup-norm-Lambda-1} into account, we can estimate
\begin{align*}
\|\Gamma&(v)\|_{C_{\beta}(T)}\le K\left \{\|u_{0,3}\|_{L^2}+\frac{T}{1-\beta}\|u_{0,2}\|_{L^2}\right.\\
&\left.+\frac{C_1M}{(1-\beta)(1-\theta)}\gamma T^{2-\theta}\exp\left
(\frac{C_1}{1-\theta}\gamma T^{1-\theta}M\right )\left
(\|u_{0,1}\|_{C_{\beta}(T)}+T\|\alpha\|_{L^2}\right )\right \},
\end{align*}
for any $v\in {\mathscr Y}_{\beta}(T)$. Hence, if we fix
$M>K\|u_{0,3}\|_{L^2}$, we can then choose $T$ small enough such
that $\|\Gamma(v)\|_{C_{\beta}(T)}\le M$ for any $v\in {\mathscr
Y}_{\beta}(T)$. Moreover, taking \eqref{sup-contr-1} into account,
we can estimate
\begin{align*}
\|\Gamma(w)&-\Gamma(v)\|_{C_{\beta}(T)}\le \frac{K}{1-\beta}T
\|\Lambda_2(w)-\Lambda_2(v)\|_{C([0,T];L^2)}\\
& \le \frac{KC_1}{(1-\beta)(1-\theta)}\gamma T^{2-\theta}\exp\left
(\frac{C_1}{1-\theta}\gamma T^{1-\theta}M \right )\|w-v\|_{C_{\theta}(T)}\notag\\
&\qquad\quad\;\times\left\{\|u_{0,1}\|_{L^2}+T\|\alpha\|_{L^2}+M\left
(1+\frac{C_1}{(1-\theta)(2-\theta)}\gamma
T^{2-\theta}\|\alpha\|_{L^2}\right )\right\},
\end{align*}
for any $v,w\in {\mathscr Y}_{\theta}(T)$. This estimate shows that
$\Gamma$ is a $1/2$-contraction provided that $T$ is sufficiently
small. We can thus apply the Banach fixed point theorem and conclude
that there exist $T>0$ and a unique function $u_3\in {\mathscr
Y}_{\beta}(T)$ solving the equation $\Gamma(u_3)=u_3$.

The function $u_3$ actually belongs to $C([0,T];L^2)\cap
C^1((0,T];L^2)\cap C((0,T];H^2_{\sharp})$. Indeed, by
\eqref{C-alpha-Lambda-1}, the function $\Lambda_2(u_3)$ is in
$C^{1-\beta}([0,T];L^2)$. Therefore, \cite[Thm. 4.3.1(i)]{lunardi}
guarantees that the function $\Gamma(u_3)$ has the claimed
regularity properties. Moreover, $D_tu_3=\Delta u_3-\mu_V
u_3+\Lambda_2(u_3)$ in $(0,T]$. As a byproduct, the triplet
$(u_1,u_2,u_3)$ is a classical solution to Problem \eqref{pb-u2-u3}.

By a classical argument we can extend the solution $(u_1,u_2,u_3)$
to a maximal solution defined in some time domain $[0,T_*)$. This
vector valued function (still denoted by $(u_1,u_2,u_3)$) enjoys the
following properties: $u_1,u_2\in C([0,T_*);L^2)\cap
C^1((0,T_*);L^2)$, $u_3\in C([0,T_*);L^2)\cap C^1((0,T_*);L^2)\cap
C((0,T_*);H^2_{\sharp})$.

{\em Step 2.} Here, we prove that if ${\bf u}_0\ge 0$ (where the
inequality is meant componentwise) then the maximal defined solution
${\bf u}$ to Problem \eqref{pb-u2-u3} is non-negative as well in
$(0,T_*)$. Clearly, using formulae \eqref{op-Lambda-1} and
\eqref{operat-Lambda2} it is immediate to check that $u_1$ and $u_2$
are both non-negative whenever $u_{0,1}$ is.

Let us now consider the problem for $u_3$, which we rewrite here:
\begin{equation}
\left\{
\begin{array}{ll}
D_tu_3(t,\cdot)=\Delta u_3(t,\cdot)-\mu_Vu_3(t,\cdot)+N\mu_I
u_2(t,\cdot), &
t\in (0,T_*),\\[2mm]
u_3(0,\cdot)=u_{0,3}.
\end{array}
\right. \label{pb-u3}
\end{equation}
The heat semigroup is positive in $C_{\sharp}$ by the maximum
principle. Since the heat semigroup in $C_{\sharp}$ is the
restriction to $C_{\sharp}$ of the heat semigroup
$\{e^{t\Delta_2}\}$ in $L^2$, by density it follows that
$\{e^{t\Delta_2}\}$ is non-negative as well. This is enough for our
aims. Indeed, the function $u_3$ is given by the variation of
constants formula
\begin{equation}
u_3(t,\cdot)=e^{t(\Delta_2-\mu_V)}u_{0,3}
+N\mu_I\int_0^te^{(t-s)(\Delta_2-\mu_V)}u_2(s,\cdot)ds,\qquad\;\,t\in
(0,T_*),
\label{form-u3}
\end{equation}
$u_{0,3}$ and $u_2$ being non-negative, the function $u_3$ is
non-negative as well.

We have so proved that any solution to Problem \eqref{pb-u2-u3},
corresponding to an initial datum in the first octant, is confined
to the first octant for any $t\in (0,T_*)$. In such a case we can
forget the absolute value in \eqref{pb-u2-u3}.

{\em Step 3.} Here, we prove that any solution ${\bf u}$ to Problem
\eqref{pb-u2-u3}, corresponding to an initial datum ${\bf u}_0\in
{\mathscr D}$, exists for any positive time and it stays bounded.
Here,
\begin{align}
&{\mathscr D}=\{(x,y,z)\in\R^3: 0\le x+y\le M_1,\;\,0\le z\le
M_2\},\nonumber\\
&M_1=\frac{\|\alpha\|_{\infty}}{\min\{\mu_T,\mu_I\}},\qquad
M_2=\frac{M_1N\mu_I}{\mu_V}\|\alpha\|_{\infty}.
\label{choiceM1-M2}
\end{align}
 For this purpose, it is convenient to
introduce the so-called Svab-Zeldovich variable $v=u_1+u_2$. As it
is immediately seen, the function $v$ satisfies the Cauchy problem
\begin{eqnarray*}
\left\{
\begin{array}{ll}
D_tv(t,\cdot)=\alpha-\mu_Tu_1-\mu_Iu_2, & t\in (0,T_*),\\[2mm]
v(0,\cdot)=u_{0,1}+u_{0,2}.
\end{array}
\right.
\end{eqnarray*}
Since $u_1$ and $u_2$ are both positive, then
\begin{equation}
D_tv(t,\cdot)\le\alpha-\min\{\mu_T,\mu_I\}v(t,\cdot):=\alpha-\mu
v(t,\cdot)\le\|\alpha\|_{\infty}-\mu v(t,\cdot),\qquad\,\,t\in
(0,T_*). \label{monk}
\end{equation}
Multiplying both the sides of \eqref{monk} by a non-negative
function $\varphi\in L^2$ and integrating over $\Omega_{\ell}$, one
obtains that the function
$w(t)=\int_{\Omega_{\ell}}v(t,x)\varphi(x)dx$ is in $C^1([0,T_*))$
and solves the differential inequality
\begin{eqnarray*}
D_tw(t)\le \|\alpha\|_{\infty}\|\varphi\|_{L^1(\Omega_{\ell})}-\mu
w(t),\qquad\;\,t\in (0,T_*).
\end{eqnarray*}
Hence,
\begin{eqnarray*}
w(t)\le e^{-\mu t}\left
(\int_{\Omega_{\ell}}v(0,x)\varphi(x)dx+\frac{1}{\mu}(e^{\mu
t}-1)\|\alpha\|_{\infty}\int_{\Omega_{\ell}}\varphi(x)dx \right
),\qquad\,t\in (0,T_*),
\end{eqnarray*}
or, equivalently,
\begin{eqnarray*}
\int_{\Omega_{\ell}}\left (v(t,x)-e^{-\mu
t}v(0,x)-\frac{1}{\mu}(1-e^{-\mu t})\|\alpha\|_{\infty}\right
)\varphi(x)dx\le 0,\qquad\;\,t\in (0,T_*).
\end{eqnarray*}
 From this integral inequality, we can infer that
\begin{equation}
v(t,x)\le e^{-\mu t}v(0,x)+\frac{1}{\mu}(1-e^{-\mu
t})\|\alpha\|_{\infty}\le \frac{1}{\mu}\|\alpha\|_{\infty}=M_1,
\label{stima-v}
\end{equation}
for any $t\in [0,T_*)$ and almost any $x\in \Omega_{\ell}$.

Since $v=u_1+u_2$ and $u_1$ and $u_2$ are both non-negative, it
follows that $u_1$ and $u_2$ can be estimated by the right-hand side
of \eqref{stima-v}.

Finally, let us consider the function $u_3$. From \eqref{form-u3}
and the above results, we can infer that
\begin{eqnarray*}
D_tu_3(t,\cdot)\le\Delta
u_3(t,\cdot)-\mu_Vu_3(t,\cdot)+M_1N\mu_I,\qquad t\in (0,T_*).
\end{eqnarray*}
Note that the function $\overline u_3\equiv M_2$, with $M_2$ being
given by \eqref{choiceM1-M2}, satisfies the previous inequality.
Hence, if $u_{0,3}\le M_2$, then, the solution to Problem
\eqref{pb-u3} is bounded from above by $M_2$. With the previous
choices of $M_1$ and $M_2$, we see that the solution to Problem
\eqref{pb-u2-u3} which corresponds to ${\bf u}_0\in {\mathscr D}$,
stays in ${\mathscr D}$ for any $t\in (0,T_*)$. By virtue of
\cite[Prop. 7.1.8]{lunardi}, ${\bf u}$ can be extended to all the
positive times.

{\em Step 4.} Here, we show that, for any ${\bf u}_0\in {\mathscr
D}$, the solution ${\bf u}$ that we have determined in the previous
steps is, in fact, the unique weak solution to Problem
\eqref{pb-u2-u3} which belongs to $L^2((0,T);L^2)\times
L^2((0,T);L^2)\times L^2((0,T);H^1_{\sharp})$ for any $T>0$. Even if
the following arguments are standard, for the reader's convenience
we go into details.

As a first step, we observe that, since $u_1$, $u_2$ and $u_3$ are
bounded, the weak derivatives $D_tu_1$ and $D_tu_2$ are in
$L^{\infty}((0,+\infty)\times\Omega_{\ell})=L^{\infty}((0,\infty);L^{\infty}(\Omega_{\ell}))$.
Hence, $u_1$ and $u_2$ are locally Lipschitz continuous in
$[0,+\infty)$ with values in $L^{\infty}(\Omega_{\ell})$.

Let us now consider the Cauchy problem for $u_3$ (i.e., problem
\eqref{pb-u3}). Since $u_2$ is Lipschitz continuous in $[0,T]$ with
values in $L^2$, for any $T>0$, by \cite[Thm. 4.3.1(i)]{lunardi},
such a Cauchy problem admits a solution which is in
$C([0,T];L^2)\cap C^1((0,T];L^2)\cap C((0,T];H^2_{\sharp})$. By the
weak maximum principle, the Cauchy problem \eqref{pb-u3} admits a
unique weak solution. Hence, $u_3\in C([0,T];L^2)\cap
C^1((0,T];L^2)\cap C((0,T];H^2_{\sharp})$. Now, we turn back to the
equations for $u_1$ and $u_2$ and conclude that $D_tu_1$ and
$D_tu_2$ are in $C([0,T];L^2)$ for any $T>0$, this implying that
$u_1$ and $u_2$ are in $C^1([0,T];L^2)$. Hence, any weak solution to
Problem \eqref{pb-u2-u3} with data in ${\mathscr D}$ is such that
$u_1,u_2\in C^1([0,T_*);L^2)$ and $u_3\in C([0,T_*);L^2)\cap
C^1((0,T_*);L^2) \cap C((0,T_*);H^2_{\sharp})$. Since we have proved
uniqueness of the solution in this class of functions, uniqueness of
the weak solution follows as well.

Since for non-negative solutions the Cauchy problem \eqref{pb-u2-u3}
coincides with problem \eqref{pde1-ter}-\eqref{pde3-ter}, we have,
thus, established the following:

\begin{proposition} For every ${\bf u_0} \in {\bf H}$, the Cauchy problem
\eqref{pb-u2-u3} possesses a unique solution for all time, ${\bf
u}(t) \in {\bf H}$ for all $t$, ${\bf u} \in L^2(0,T;{\bf V})$ for
all $T>0$. The mapping ${\mathscr S}(t):={\bf u_0} \mapsto {\bf
u}(t)$ is continuous in ${\bf H}$. Furthermore, if ${\bf u_0} \in
{\bf V}$, then $u_3 \in L^2(0,T;H_{\sharp}^2)$.
\end{proposition}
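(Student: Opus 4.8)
The plan is to collect the facts already established in Steps~1--4 and to supplement them with two energy estimates and a continuous-dependence argument. Steps~1, 3 and 4 furnish, for every ${\bf u}_0\in{\bf H}$, a unique globally defined solution ${\bf u}$, while Steps~2 and 3 show that ${\bf u}(t)$ stays in ${\mathscr D}$ for all $t$, that is ${\bf u}(t)\in{\bf H}$ for every $t\ge 0$. Thus existence, uniqueness and the invariance of ${\bf H}$ are in hand, and only the assertions ${\bf u}\in L^2(0,T;{\bf V})$, the continuity of ${\mathscr S}(t)$, and the additional regularity of $u_3$ remain to be proved.

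First I would show ${\bf u}\in L^2(0,T;{\bf V})$. Since $u_1,u_2,u_3$ are bounded (Step~3), they all lie in $L^2(0,T;L^2)$, so the only nontrivial point is $u_3\in L^2(0,T;H^1_{\sharp})$. Multiplying the equation for $u_3$ by $u_3$ and integrating over $\Omega_{\ell}$ (for $t>0$ one has $u_3(t)\in H^2_{\sharp}$, so integration by parts with the periodic boundary conditions is legitimate) gives
\begin{equation*}
\frac{1}{2}\frac{d}{dt}\int_{\Omega_{\ell}}u_3^2\,dx
=-\int_{\Omega_{\ell}}|\nabla u_3|^2\,dx-\mv\int_{\Omega_{\ell}}u_3^2\,dx
+N\mi\int_{\Omega_{\ell}}u_2u_3\,dx .
\end{equation*}
Integrating on $(\varepsilon,T)$, rearranging, and letting $\varepsilon\to 0$ bounds $\int_0^T\|\nabla u_3\|_{L^2}^2\,dt$ in terms of $\|u_{0,3}\|_{L^2}$ and the uniform bounds on $u_2,u_3$, whence $u_3\in L^2(0,T;H^1_{\sharp})$ and ${\bf u}\in L^2(0,T;{\bf V})$.

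To obtain the continuity of ${\mathscr S}(t)$ on ${\bf H}$, take two solutions ${\bf u},{\bf w}$ with data in ${\bf H}$ and set $z_i=u_i-w_i$. The coupling term splits as $u_1u_3-w_1w_3=u_1z_3+z_1w_3$, which is controlled by the uniform bounds $\|u_1\|_{\infty}\le M_1$ and $\|w_3\|_{\infty}\le M_2$ coming from ${\mathscr D}$. Testing the equations for $z_1,z_2,z_3$ against $z_1,z_2,z_3$, summing, and applying Young's inequality (the dissipative term $-\|\nabla z_3\|_{L^2}^2$ having the good sign) yields
\begin{equation*}
\frac{d}{dt}\big(\|z_1\|_{L^2}^2+\|z_2\|_{L^2}^2+\|z_3\|_{L^2}^2\big)
\le C\big(\|z_1\|_{L^2}^2+\|z_2\|_{L^2}^2+\|z_3\|_{L^2}^2\big),
\end{equation*}
with $C$ depending only on $M_1,M_2,\gamma,N,\mt,\mi,\mv$. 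Gronwall's lemma then gives $\|{\bf u}(t)-{\bf w}(t)\|_{(L^2)^3}\le e^{Ct/2}\|{\bf u}_0-{\bf w}_0\|_{(L^2)^3}$, so ${\mathscr S}(t)$ is continuous (indeed Lipschitz on bounded time intervals) on ${\bf H}$.

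Finally, assume in addition $u_{0,3}\in H^1_{\sharp}$. Testing the $u_3$-equation against $-\Delta u_3$ and integrating by parts gives
\begin{equation*}
\frac{1}{2}\frac{d}{dt}\int_{\Omega_{\ell}}|\nabla u_3|^2\,dx
+\int_{\Omega_{\ell}}|\Delta u_3|^2\,dx+\mv\int_{\Omega_{\ell}}|\nabla u_3|^2\,dx
=-N\mi\int_{\Omega_{\ell}}u_2\Delta u_3\,dx ,
\end{equation*}
and Young's inequality absorbs the right-hand side into $\tfrac12\|\Delta u_3\|_{L^2}^2$ plus a multiple of $\|u_2\|_{L^2}^2$. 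Integrating on $(0,T)$ and using $u_{0,3}\in H^1_{\sharp}$ together with the boundedness of $u_2$ bounds $\int_0^T\|\Delta u_3\|_{L^2}^2\,dt$; elliptic regularity for the periodic Laplacian, $\|v\|_{H^2_{\sharp}}\le C(\|v\|_{L^2}+\|\Delta v\|_{L^2})$, then yields $u_3\in L^2(0,T;H^2_{\sharp})$. The algebra throughout is routine; the only point needing care is the justification of the two energy identities up to $t=0$, since for data merely in ${\bf H}$ (respectively with $u_{0,3}\in H^1_{\sharp}$) the solution is classical only for $t>0$. This is handled, as indicated above, by deriving each identity on $[\varepsilon,T]$ and passing to the limit $\varepsilon\to 0$, using the continuity of $t\mapsto u_3(t)$ in $L^2$ (respectively in $H^1_{\sharp}$) guaranteed by Step~1.
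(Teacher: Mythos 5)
Your proposal is correct, and it actually does more work than the paper does: in the source, this Proposition is stated as a summary of Steps 1--4 (local existence and uniqueness via a fixed point in $C_\beta(T)$, positivity, invariance of ${\mathscr D}$ and global existence, uniqueness of the weak solution), and the paper gives no explicit argument for the membership ${\bf u}\in L^2(0,T;{\bf V})$, for the continuity of ${\mathscr S}(t)$ on ${\bf H}$, or for $u_3\in L^2(0,T;H^2_{\sharp})$ when ${\bf u}_0\in{\bf V}$. Your two energy identities (testing the $u_3$-equation against $u_3$ and against $-\Delta u_3$) and the Gronwall continuous-dependence estimate, with the nonlinearity split as $u_1u_3-w_1w_3=u_1z_3+z_1w_3$ and controlled by the $L^\infty$ bounds coming from ${\mathscr D}$, are exactly the standard way to supply these missing pieces, and they are needed: Step 4 only yields $u_3\in C((0,T];H^2_{\sharp})$, which by itself does not control $\int_0^T\|u_3\|_{H^1_{\sharp}}^2\,dt$ near $t=0$ for data merely in $L^2$. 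The one place to be slightly more careful is your final limiting argument for the $H^2$ estimate: Step 1 only gives continuity of $t\mapsto u_3(t)$ in $L^2$ at $t=0$, so the bound $\limsup_{\varepsilon\to 0}\|\nabla u_3(\varepsilon)\|_{L^2}<+\infty$ for $u_{0,3}\in H^1_{\sharp}$ should instead be read off from the variation-of-constants formula \eqref{form-u3}, using that $\{e^{t\Delta_2}\}$ is a strongly continuous semigroup on $H^1_{\sharp}$ and that $u_2$ is bounded; with that substitution the argument closes.
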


{\em Step 3.} We are now in a position to apply \cite[Thm.
5.1]{marion}. The set
\begin{eqnarray*}
{\mathscr D}=\{(x,y,z)\in\R^3: 0\leq x+y\leq M_1,\; 0\leq z\leq
M_2\},
\end{eqnarray*}
is a positively convex, compact region of $\R^3$. To meet all the
hypotheses of \cite[Thm. 5.1]{marion}, it remains to consider the
non-dissipative part of \eqref{pb-u2-u3}, i.e. the equations for
$u_1$ and $u_2$ which we rewrite in the compact form:
\begin{eqnarray*}
D_t\begin{pmatrix} u_1\\
u_2
\end{pmatrix}
+\begin{pmatrix} \mt+\gamma u_3 & 0 \\ -\gamma u_3 & \mi
\end{pmatrix}\begin{pmatrix} u_1\\
u_2
\end{pmatrix}
+\begin{pmatrix} \alpha\\
0 \end{pmatrix} := G(\cdot,u_3)
\begin{pmatrix} u_1\\
u_2
\end{pmatrix}
+g(\cdot).
\end{eqnarray*}
Obviously the matrix $G(\cdot,u_3)$ has positive eigenvalues
whenever $u_3 \geq 0$, which are bounded from below by positive
constants. Hence,
\begin{eqnarray*}
\langle G(\cdot,u_3)\xi,\xi\rangle \; \geq \mt \|\xi\|^2,
\end{eqnarray*}
for any $\xi\in \R^2$. Thus, condition (4.6) in \cite{marion} is
satisfied. The proof of Theorem \ref{stab-lambda_0-positive} is
completed.
\section{A gamut of some special cases} \label{special}
\setcounter{equation}{0}
\subsection{Numerical illustration (evolution)}\label{numerics2}
We continue the discussion of Subsection \ref{numerics1} in the
framework of the evolution problem
\eqref{pde1-ter}-\eqref{pde3-ter}. In the first case
($\lambda_0<0$), only the non-infected steady state $V_u\equiv 0$
exists. We solve \eqref{pde1-ter}-\eqref{pde3-ter} under particular
initial conditions: we start the infection at the center of the grid
with an inoculum of one viral unit, assuming that $T$ and $I$ are at
their uninfected steady state. One observes that the virus vanishes
very rapidly and the target cells return to their initial level (see
Fig. \ref{evol2} left) in accordance with the stability of the
uninfected equilibrium $V_u$. In the second case ($\lambda_0>0$),
two equilibria exist, $V_u$ and $V_i$. Starting with the same
initial conditions, the virus population grows while the population
of target cells decreases. Both of them achieve an equilibrium
corresponding to the positive infected solution (see Fig.
\ref{evol2} right).
\begin{figure}[htp]
\begin{center}
\epsfig{file=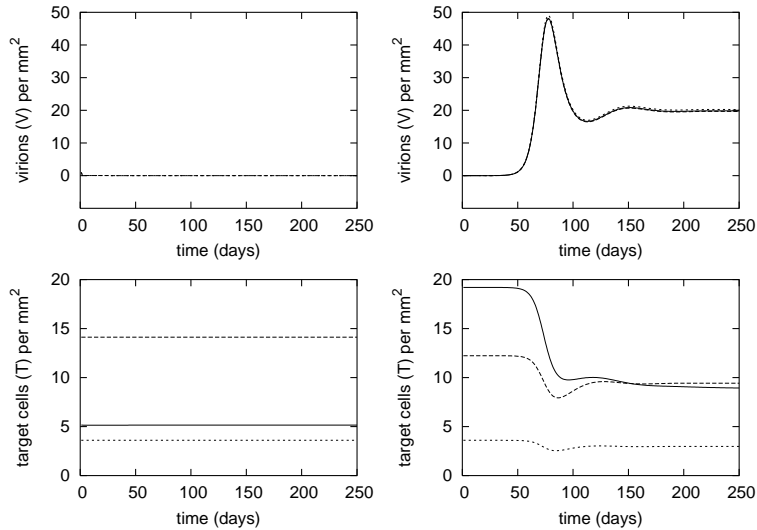,scale=0.8}
\end{center}
\caption{Dynamics of free virus (top) and target cells (bottom) on a
$40 \times 40$ grid. Left: $R_0$ as in Fig. \ref{R012}a,
$\lambda_0<0$ (no infection); right: $R_0$ as in Fig. \ref{R012}b,
$\lambda_0>0$ (infection). The solid, the long and the short dashed
lines depict respectively the dynamics at (10,10), at the site of
viral inoculum (20,20) and at a border site (1,1). Here
$d_V=1,~\ell=1$.} \label{evol2}
\end{figure}%

We are now going to review some particular cases where the stability of
the infected solution is granted.
\subsection{Homogeneous environment with diffusion}
We consider the case when $\alpha$ is a constant such that $R_0$
(see \eqref{R0-def}) is a constant, as well, greater than $1$,
together with $d_V>0$ as in \cite{funk}.

The linearization around $(T_i,I_i,V_i)$ (see \eqref{T-inst-const})
of Problem \eqref{pde1-ter}-\eqref{pde3-ter} is associated with the
linear operator
\begin{eqnarray*}
{\mathscr L}_i=
\begin{pmatrix}
     - \mt R_0 Id & 0 & \displaystyle -\frac{\mv}{N} Id \\[3.5mm]
    \mt(R_0-1) Id& -\mi Id & \displaystyle \frac{\mv}{N} Id\\[3.5mm]
    0 & N \mi Id & d_V\Delta -\mv Id\\
\end{pmatrix}.
\end{eqnarray*}
The same arguments as in the proof of Theorem
\ref{stab-lambda_0-positive}(i) show that the realization $L_i$ of
the operator ${\mathscr L}_i$ in $(L^2)^3$ with domain
$D(L_i)=L^2\times L^2\times H^2_{\sharp}$ generates an analytic
strongly continuous semigroup. We are going to determine its
spectrum. For this purpose we use the discrete Fourier transform.

We consider the realization $\Delta_2$ of the operator $\Delta$ with
domain ${H}_{\sharp}^{2}$. Its real eigenvalues can be labeled as a
non-increasing sequence $(-\lambda_k),~ k=0,1, \ldots$ Only
$\lambda_0=0$ is simple, the other eigenvalues being negative such
that $-\lambda_k \to -\infty$ as $k \to + \infty$.

We claim that the spectrum of the operator $L_i$ is given by
\begin{equation}
\sigma (L_i)= \{-\mu_I,-\mu_TR_0\}\cup\bigcup_{k\in\N\cup\{0\}}
\sigma_k, \label{eq-spectrum}
\end{equation} where $\sigma_k$  is the spectrum
of the matrix
\begin{eqnarray*}
M_k=\begin{pmatrix}
     - \mt R_0 & 0 & \displaystyle -\frac{\mv}{N} \\[3.5mm]
    \mt(R_0-1) & -\mi & \displaystyle \frac{\mv}{N} \\[3.5mm]
    0 & N \mi & -\lambda_k d_V-\mv \\
\end{pmatrix}.
\end{eqnarray*}

To check the claim, let us observe that, if a function ${\bf
v}=(v_1,v_2,v_3)$ in $L^2\times L^2\times H^2$ solves the resolvent
equation $\lambda {\bf u}-L_i{\bf u}={\bf f}$, for some
$\lambda\in\C$ and ${\bf f}=(f_1,f_2,f_3)$ in $(L^2)^3$, then its
Fourier coefficients $(v_{1,k},v_{2,k},v_{3,k})$ ($k=0,1,\ldots$)
solve the infinitely many equations
\begin{eqnarray*}
\begin{pmatrix}
\lambda +     \mt R_0 & 0 & \displaystyle \frac{\mv}{N} \\[3.5mm]
    -\mt(R_0-1) & \lambda+\mi & \displaystyle -\frac{\mv}{N} \\[3.5mm]
    0 & -N \mi & \lambda+\lambda_k d_V +\mv \\
\end{pmatrix}
\begin{pmatrix}
v_{1,k}\\[3.5mm]
v_{2,k}\\[3.5mm]
v_{3,k}
\end{pmatrix}
=
\begin{pmatrix}
f_{1,k}\\[3.5mm]
f_{2,k}\\[3.5mm]
f_{3,k}
\end{pmatrix},
\qquad\;\,k=0,1,\ldots,
\end{eqnarray*}
where $f_{i,k}$ denotes the $k$-th Fourier coefficient of the
function $f_i$ ($i=1,2,3$). Clearly, any eigenvalue of $M_k$
($k=0,1,\ldots$) is an eigenvalue of $L_i$. Therefore,
\begin{eqnarray*}
\sigma (L_i)\supset \bigcup_{k\in\N\cup\{0\}} \sigma_k.
\end{eqnarray*}

On the other hand, if $\lambda\not\in\sigma_k$ for any
$k=0,1,\ldots$, then all the coefficients
$(v_{1,k},v_{2,k},v_{3,k})$ are uniquely determined through the
formulae
\begin{align*}
v_{1,k}=&\frac{1}{{\mathscr P}_k(\lambda)}\Big\{
[(\lambda+\mu_I)(\lambda+\lambda_kd_V+\mu_V)-\mu_I\mu_V]f_{1,k}
-\mu_I\mu_Vf_{2,k}\\
&\qquad\qquad-\frac{\mu_V}{N}(\lambda+\mu_I)f_{3,k}\Big\},\\[2mm]
v_{2,k}=&\frac{1}{{\mathscr P}_k(\lambda)}\Big\{
\mu_T(R_0-1)(\lambda+\lambda_kd_V+\mu_V)f_{1,k} +
(\lambda+\mu_TR_0)(\lambda+\lambda_kd_V+\mu_V)f_{2,k}\\
&\qquad\qquad+\frac{\mu_V}{N}(\mu_T+\lambda)f_{3,k}\Big\},\\[2mm]
v_{3,k}=&\frac{1}{{\mathscr
P}_k(\lambda)}\left\{N\mu_I\mu_T(R_0-1)f_{1,k}+N\mu_I(\lambda+\mu_TR_0)f_{2,k}\right.\\
&\qquad\qquad+\left.(\lambda+\mu_TR_0)(\lambda+\mu_I)f_{3,k}\right\},
\end{align*}
 where
\begin{eqnarray*}
{\mathscr P}_k (\lambda)= \lambda^3 + b_k \lambda^2 + c_k \lambda +
d_k= 0, \quad k=0,1, \ldots
\end{eqnarray*}
and
\begin{align*}
b_k &= \mt R_0 + \mi + \mv +d_V \lambda_k, \\
c_k &= \mt R_0 (\mi + \mv) +d_V\lambda_k(\mt R_0+\mi), \\
d_k &= \mi \mv \mt (R_0 -1) + d_V\lambda_k \mi\mt R_0.
\end{align*}%
Note that, if $\lambda$ differs from both $-\mu_I$ and $-\mu_TR_0$,
then
\begin{eqnarray*}
{\mathscr P}_k(\lambda)\sim
d_V\left\{\lambda^2+\lambda(\mu_TR_0+\mu_I)+\mu_I\mu_TR_0\right\}\lambda_k,\qquad\;\,
{\rm as}~k\to +\infty.
\end{eqnarray*}
Hence, for any
$\lambda\notin\{-\mu_I,-\mu_TR_0\}\cup\bigcup_{k\in\N\cup\{0\}}\sigma_k$,
it holds that
\begin{align*}
v_{1,k} &\sim \frac{\lambda+\mu_I}{\lambda^2+\lambda(\mu_TR_0+\mu_I)+\mu_I\mu_TR_0}f_{1,k},\\[2mm]
v_{2,k}&\sim \frac{\mu_T(R_0-1)f_{1,k}+(\lambda+\mu_TR_0)f_{2,k}}{\lambda^2+\lambda(\mu_TR_0+\mu_I)+\mu_I\mu_TR_0},\\[2mm]
v_{3,k}
&\sim\frac{N\mu_I\mu_T(R_0-1)f_{1,k}+N\mu_I(\lambda+\mu_TR_0)f_{2,k}
+(\lambda+\mu_TR_0)(\lambda+\mu_I)f_{3,k}}
{d_V\{\lambda^2+\lambda(\mu_TR_0+\mu_I)+\mu_I\mu_TR_0\}}\frac{1}{\lambda_k},
\end{align*}
as $k\to +\infty$. It follows that the sequences $\{v_{1,k}\}$,
$\{v_{2,k}\}$ and $\{\lambda_kv_{3,k}\}$ are in $\ell^2$. This shows
that the series whose Fourier coefficients are $v_{1,k}$, $v_{2,k}$
and $v_{3,k}$, respectively, converge in $L^2$ (the first two ones)
and in $H^2_{\sharp}$ (the latter one). The inclusion
\begin{eqnarray*}
\sigma (L_i)\subset
\{-\mu_I,-\mu_TR_0\}\cup\bigcup_{k\in\N\cup\{0\}} \sigma_k
\end{eqnarray*}
follows.
We now observe that a straightforward computation shows
that $\lambda=-\mu_TR_0$ is in the essential spectrum of $L_i$. Also
$\lambda=-\mu_I$ belongs to the essential spectrum of $L_i$ and, in
the case when $\mu_I=\mu_T$, it belongs also to the point spectrum.
The set equality \eqref{eq-spectrum} is proved.

Clearly $\sigma_k$ has three eigenvalues (counted with their
multiplicity), either all real, or one real and two complex
conjugates. Routh-Hurwitz criterion enables us to determine whether
the elements of $\sigma_k$  have negative real parts. The latter
holds if and only if $b_k$, $d_k$ and $b_k c_k-d_k$ are positive,
which is clearly true whenever $R_0>1$.

\begin{remark}
As it is easily seen $\sigma_0$ is the spectrum of
the operator
\begin{eqnarray*}
\begin{pmatrix}
     - \mt R_0 Id & 0 & \displaystyle -\frac{\mv}{N} Id \\[3.5mm]
    \mt(R_0-1) Id& -\mi Id & \displaystyle \frac{\mv}{N} Id\\[3.5mm]
    0 & N \mi Id & -\mv Id\\
\end{pmatrix},
\end{eqnarray*}
which is associated with the linearization at $(T_i,I_i,V_i)$ of
Problem \eqref{ode1}-\eqref{ode3}. Since, as we have already
remarked, the spectrum of $L_i$ is the union of the sets $\sigma_k$
($k\in\N\cup\{0\}$) and the points $-\mu_I$, $-\mu_TR_0$, the
scenario is one of the following:
\begin{enumerate}[(a)]
\item
the diffusion does not improve the stability of the solution $(T_i,I_i,V_i)$.
Therefore, the stability issue is identical to that of the ODE system ($d_V=0$);
\item
the diffusion worsen the stability of the solution $(T_i,I_i,V_i)$.
\end{enumerate}
Therefore, we are unable to confirm Funk et al. \cite{funk}, who
pointed out that the presence of a spatial structure enhances
population stability with respect to non-spatial models. Only some
smoothing effect can be credited to the diffusion.
\end{remark}

\subsection{Death rates $\mi \leq \mt$, $\lambda_0>0$}
This case leads to a mathematically interesting framework although it has
little biological relevance, since $\mi > \mt$ in the literature (e.g., $\mt=0.01,~\mi=0.39$
in \cite{ciupe}). For the latter reason we will not elaborate the case extensively.

Using the Svab-Zeldovich variable $v=u_1+u_2$ we can transform
Problem \eqref{pde1-ter}-\eqref{pde3-ter} into the following
equivalent one for the unknowns $u_1$, $v$ and $u_3$:
\begin{equation}
\left\{
\begin{array}{ll}
D_tu_1(t,\cdot)=-\mu_Tu_1(t,\cdot)-\gamma
u_1(t,\cdot)u_3(t,\cdot)+\alpha(\cdot),
& t>0,\\[2mm]
D_tv(t,\cdot)=\alpha-\mu_I v(t,\cdot) -(\mu_T-\mu_I) u_1(t,\cdot),
& t>0,\\[2mm]
D_tu_3(t,\cdot)=\Delta u_3(t,\cdot)-\mu_Vu_3(t,\cdot)+N\mu_I
v(t,\cdot)-N\mu_Iu_1(t,\cdot),\q & t>0,\\[2mm]
u_i(0,\cdot)=u_{0,i},\;\; i=1,3,\\[2mm]
v(0,\cdot)=u_{0,1}+u_{0,2}.
\end{array}
\right. \label{pb-u2-u3-zeldovich}
\end{equation}
It is not difficult to see that the mapping $u_3 \mapsto u_1$ is
non-increasing, so is the mapping $u_1 \mapsto v$ thanks to the
hypothesis $\mu_T-\mu_I>0$. Hence, the mapping $u_3 \mapsto v$ is
non-decreasing. Finally, the mapping $u_3 \mapsto u_2=v-u_1$ is
non-decreasing. Based on these observations, following \cite{pao} it
is possible to construct two sets of monotone sequences  which
converge to the solution of \eqref{pb-u2-u3-zeldovich}. These
sequences start respectively from upper and lower solutions defined
as in Section \ref{sec-steady}, to stay away from the trivial
solution. It is well-known that a solution of an evolution problem
constructed via such a monotone sequence scheme, with suitable
initial conditions between upper and lower solutions, achieves a
stable equilibrium (see \cite{pao}). Therefore, the infected
solution is asymptotically stable. Numerical computations in the
phase plan (see Fig. \ref{phase}) illustrate the difference in the
virus dynamics when $\mi < \mt$ (monotonicity) and $\mi > \mt$
(spirals).
\begin{figure}[htp]
\begin{center}
\begin{tabular}{c}
\epsfig{file=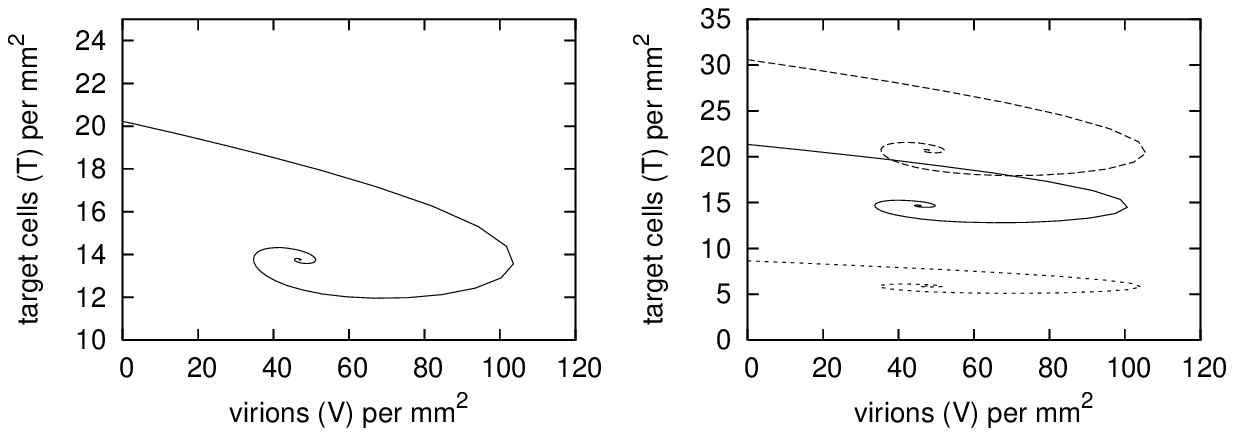}\\
(a) \Large{$\mi>\mt$}\\
\epsfig{file=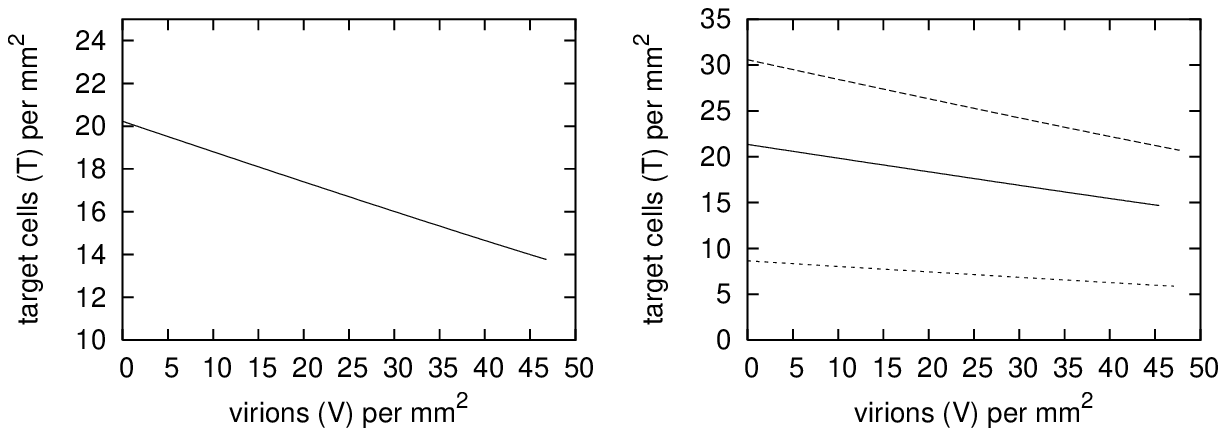}\\
(b) \Large{$\mi<\mt$}\\
\end{tabular}
\end{center}
\caption{Virus vs. target cells in the phase plane, in the case of
infection. Left: mean values. Right: the solid, the long and the
short dashed lines depict, respectively, the values at (10,10), at
the site of viral inoculum (20,20), and at a border site (1,1). The
parameters have the following values:
$\gamma=0.001,~N=1000,~\mt=0.1,~\mv=10,~d_V = 1,~\ell = 1$; $\alpha$
varies between $0.1$ and $5.0$; $\mi=1$ in (a), $\mi=0.01$ in (b).}
\label{phase}
\end{figure}
\subsection{Quasi-steady problem} In this part we assume that $T$ and $I$ are at their
equilibrium. In such a case,
\eqref{pde1-ter}-\eqref{pde3-ter} reads
\begin{align*}
&0=\alpha - \gamma V T - \mu_T T,\\
&0= \gamma V T - \mu_I I,
\\
&V_t=N \mi I- \mu_V V + d_V \Delta V,
\end{align*}
which is equivalent to the scalar parabolic equation for $V$ only,
with periodic boundary conditions:
\begin{equation}
V_t=d_V \Delta V - \mv V + \mt\mv R_0\frac{V}{\gamma V +\mt}.
\label{semil-evol}
\end{equation}
The latter is the natural evolution problem associated with
\eqref{semil}. It is clear that \eqref{semil-evol} has the same
non-positive equilibria, namely $V_u=0$ and $V_i>0$, in the case
when $\lambda_0>0$. The stability of $V_i$ can be proved according
to \cite[Sec. 5.3]{henry} by constructing a Lyapunov function.

\section{Homogenization}
\label{sec-5}
\setcounter{equation}{0}
This section is concerned
with the case wherein the environment is heterogeneous and is formed
of rapidly alternating sinks and sources. For a fixed integer $k$,
we imagine that $\overline{\Omega_{\ell}}$ is divided into a network
of $k^2$ periodic squares $\Omega_{\varepsilon}$, where
$\varepsilon=\ell/k$. The heterogenous reproductive ratio  $R_0$
will depend upon $\e$, see Fig. \ref{R03}b. The idea of
homogenization is to let $\e \to 0$ and find the equivalent
homogenized medium. Therefore, such a heterogenous environment can
be replaced by its homogenized limit for easier computations and
analysis.

\begin{figure}[htp]
\begin{center}
\begin{tabular}{cc}
\epsfig{figure=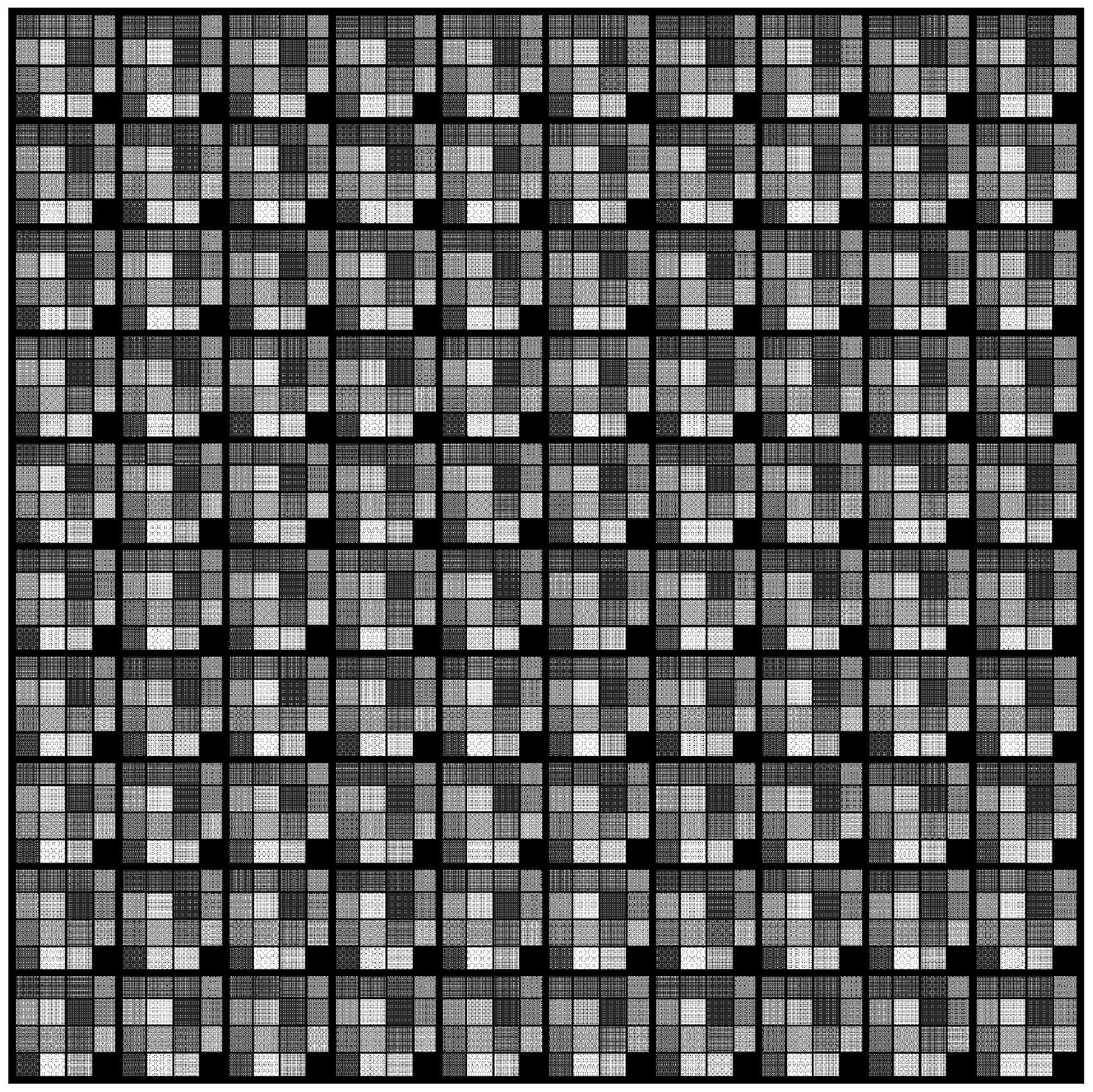,width=6.8cm,angle=0} & \epsfig{figure=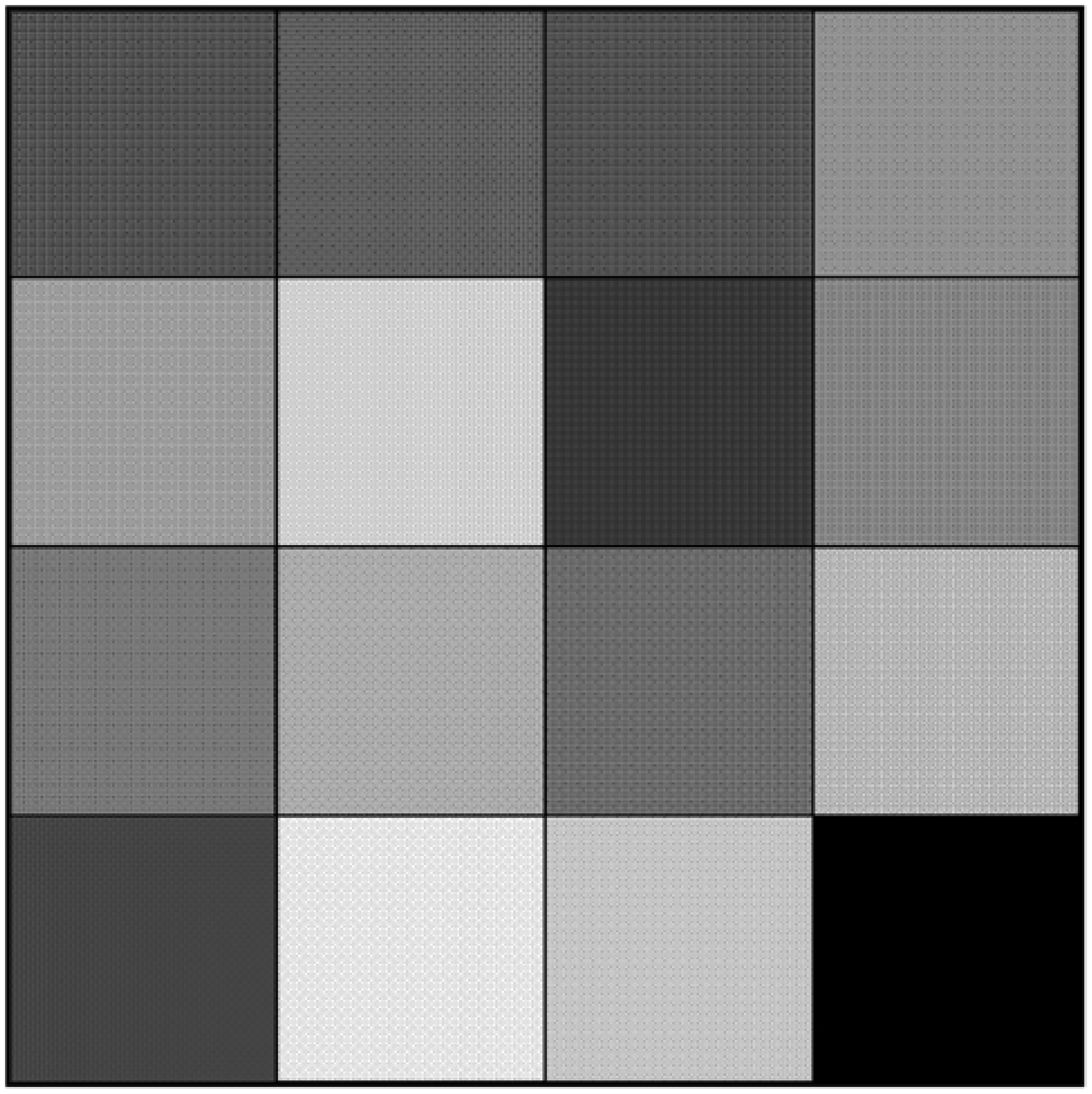,width=3.3cm,angle=0} \\
(a) & (b)\\
\end{tabular}
\end{center}
\caption{Periodic structure with $\ell=1$ (a): $\Omega_{1}$ is
divided in a network of $40 \times 40$ sites. The sites are pieced
together in $4 \times 4$ periodic squares $\Omega_{\e}$ with
$\e=0.1$. (b): zoom of $\Omega_{\e}$ which contains $16$ sites. The
darkness represents the scale of $R_0$ on each site (the darker the
square, the larger is $R_0$). \label{R03}}
\end{figure}

More precisely, we introduce a normalized periodic function $R_0$
as a function of the variable $y=(y_1,y_2)$,
of period $(1,1)$, and we define:
\begin{eqnarray*}
R^{\e}_0(x_1,x_2)=R_0\left (\frac{x_1}{\varepsilon}, \frac{x_2}{\varepsilon}\right ).
\end{eqnarray*}
\begin{remark}
$x=(x_1,x_2)$ is the macroscopic variable while $y=(y_1,y_2)$ is the
microscopic one.
\end{remark}
We consider the problem:
\begin{equation}
\label{semil-e} d_V \Delta V^{\e} - \mv V^{\e} = -\mt\mv
R^{\e}_0\frac{V^{\e}}{\gamma V^{\e} +\mt},
\end{equation}
on $\Omega_{\ell}$ with periodic boundary conditions as above. The
idea is to find the limiting {\it homogenized equation} as $\e \to
0$. We start with the following lemma:

\begin{lemma}\label{lem-4.2} Let $R_0:\R^2\to\R$ be a periodic $($with period $1$
in each variable$)$, piecewise continuous function. For $x \in \R^2$
and  $\e>0$, set $R_0^{\e}(x)= R_0(\frac{x}{\e})$. Then, the
following properties are met.
\begin{enumerate}[\rm (i)]
\item
$R_0^{\e}$ tends to ${\mathscr M}_y (R_0)$ as $\varepsilon\to 0$, weakly
in $L^p_{\rm loc}(\R^2)$ for any $p\geq 1$;
\item
For any $\e>0$, set
\begin{equation}
\label{variat-e} -\lambda_0^{\e} = \inf_{\psi \in {H^1_{\sharp}},
\psi \not\equiv
0}\left\{\frac{d_V\int_{\Omega_{\ell}}|\nabla\psi|^2dx +
\mv\int_{\Omega_{\ell}}(1-R_0^{\e})\psi^2 dx}
{\int_{\Omega_{\ell}}\psi^2 dx} \right\}.
\end{equation}
Then, $\lambda_0^{\varepsilon} \to \mv({\mathscr M}_y (R_0) -1)$ as
$\varepsilon \to 0$.
\end{enumerate}
\end{lemma}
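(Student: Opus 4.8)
The two assertions are of different natures: (i) is the classical weak-convergence-to-the-mean for rapidly oscillating periodic functions, while (ii) is a variational ($\Gamma$-convergence-type) statement for the associated Rayleigh quotients. I treat them in turn. For (i), first note that, being periodic and piecewise continuous, $R_0$ is bounded, so the family $\{R_0^{\e}\}$ is uniformly bounded in $L^{\infty}(\mathbb R^2)$, hence in $L^p_{\rm loc}$ for every $p\ge 1$. Fix a bounded set $K\subset\mathbb R^2$. Since $\{R_0^{\e}\}$ is bounded in $L^p(K)$, by density of $C(\overline K)$ in $L^{p'}(K)$ it suffices to test against a continuous $\phi$. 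I partition $K$, up to a boundary layer of measure $O(\e)$, into the $\e$-cells $Q_i^{\e}$ of the scaled periodic lattice; on each cell the change of variables $y=x/\e$ and periodicity give $\int_{Q_i^{\e}}R_0^{\e}\,dx=\e^2{\mathscr M}_y(R_0)$. Replacing $\phi$ by its value at a point of each cell and invoking uniform continuity, one obtains $\int_K R_0^{\e}\phi\,dx\to{\mathscr M}_y(R_0)\int_K\phi\,dx$, which is precisely weak $L^p_{\rm loc}$ convergence; the boundary layer contributes an $O(\e)$ term that is discarded.

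For (ii) I use the variational formula \eqref{variat-e} together with a two-sided control of $-\lambda_0^{\e}$. For the upper bound I insert the constant test function $\psi\equiv 1$, which gives $-\lambda_0^{\e}\le\mv\big(1-|\Omega_{\ell}|^{-1}\int_{\Omega_{\ell}}R_0^{\e}\,dx\big)$; by (i) the right-hand side tends to $\mv(1-{\mathscr M}_y(R_0))$, whence $\limsup_{\e\to 0}(-\lambda_0^{\e})\le\mv(1-{\mathscr M}_y(R_0))$. Heuristically the constant is the optimal competitor in the limit, the limiting operator $d_V\Delta+\mv({\mathscr M}_y(R_0)-1)Id$ being the homogenized constant-coefficient one whose top eigenvalue $\mv({\mathscr M}_y(R_0)-1)$ is attained on constants.

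For the lower bound I argue by compactness. By Theorem \ref{thm-2.1} the infimum in \eqref{variat-e} is attained at the eigenfunction $\psi_{\e}$ of the maximal eigenvalue, normalized by $\int_{\Omega_{\ell}}\psi_{\e}^2\,dx=1$. Since $1-R_0^{\e}\ge 1-\|R_0\|_{\infty}$, the quantity $-\lambda_0^{\e}$ is bounded below, and by the upper bound it is bounded above; with the $L^2$-normalization this forces $d_V\int_{\Omega_{\ell}}|\nabla\psi_{\e}|^2\,dx$ to be bounded, so $\{\psi_{\e}\}$ is bounded in $H^1_{\sharp}$. Passing to a subsequence, $\psi_{\e}\rightharpoonup\psi_0$ weakly in $H^1_{\sharp}$ and, by Rellich, $\psi_{\e}\to\psi_0$ strongly in $L^2$, so $\int_{\Omega_{\ell}}\psi_0^2=1$ and $\psi_0\not\equiv 0$. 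The Dirichlet term is weakly lower semicontinuous, while the potential term passes to the limit by a weak-strong pairing: in dimension two $H^1_{\sharp}\hookrightarrow L^q$ for every $q<\infty$, so interpolation with the $L^2$ convergence yields $\psi_{\e}^2\to\psi_0^2$ strongly in $L^2$, and $R_0^{\e}\rightharpoonup{\mathscr M}_y(R_0)$ weakly in $L^2$ by (i); hence $\int_{\Omega_{\ell}}R_0^{\e}\psi_{\e}^2\,dx\to{\mathscr M}_y(R_0)$. Combining,
\[
\liminf_{\e\to 0}(-\lambda_0^{\e})\ge d_V\int_{\Omega_{\ell}}|\nabla\psi_0|^2\,dx+\mv(1-{\mathscr M}_y(R_0))\ge\mv(1-{\mathscr M}_y(R_0)),
\]
the gradient term being nonnegative. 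With the upper bound this gives $-\lambda_0^{\e}\to\mv(1-{\mathscr M}_y(R_0))$, i.e. $\lambda_0^{\e}\to\mv({\mathscr M}_y(R_0)-1)$.

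\textbf{Main obstacle.} The delicate point is the lower bound in (ii): one must extract a nontrivial $H^1$-limit of the minimizers and pass to the limit in the product $R_0^{\e}\psi_{\e}^2$, in which $R_0^{\e}$ converges only weakly. This is resolved by the weak-strong pairing, which hinges on the uniform $H^1$ bound for the eigenfunctions (hence their strong $L^2$ precompactness via Rellich); that bound, in turn, rests on the two-sided estimate of $-\lambda_0^{\e}$ furnished by the constant test function and the essential boundedness of $R_0$.
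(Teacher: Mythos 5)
Your proof is correct, but it identifies the limit by a different mechanism than the paper. For (i) the paper simply cites the standard homogenization literature, whereas you give the self-contained cell-decomposition argument; both are fine. For (ii) the paper works at the level of the eigenvalue \emph{equation}: it derives a uniform $H^2_{\sharp}$ bound on the normalized eigenfunctions $\psi^{\e}$, passes to the limit in the PDE in the distributional sense (the product $R_0^{\e}\psi^{\e}$ being handled, as in your argument, by pairing weak convergence of $R_0^{\e}$ with strong/uniform convergence of $\psi^{\e}$), and then identifies the limit $\lambda_0^0$ as the \emph{largest} eigenvalue of the homogenized operator because the limiting eigenfunction inherits a fixed sign; since that operator has constant coefficients, $\psi^0$ is constant and $\lambda_0^0=\mv({\mathscr M}_y(R_0)-1)$ is explicit. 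You instead stay entirely at the level of the Rayleigh quotient \eqref{variat-e}: the constant test function gives the $\limsup$ bound for free, and the $\liminf$ bound follows from weak lower semicontinuity of the Dirichlet term plus the weak--strong pairing $\int R_0^{\e}\psi_{\e}^2\,dx\to{\mathscr M}_y(R_0)$, which only requires a uniform $H^1_{\sharp}$ bound (obtained from the two-sided control of $-\lambda_0^{\e}$) rather than $H^2_{\sharp}$ regularity. Your route is slightly more elementary and robust: it needs neither the sign-definiteness of the eigenfunctions nor the Krein--Rutman-type identification of the top eigenvalue, and the liminf--limsup sandwich yields convergence of the full family directly, where the paper has to add a final (unelaborated) uniqueness-of-limit argument. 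The only point you should make explicit is that the infimum in \eqref{variat-e} is attained by the principal eigenfunction, which follows from Theorem \ref{thm-2.1} by integrating the eigenvalue equation against the eigenfunction.
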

\begin{proof}
Property $(i)$ follows straightforwardly from e.g., \cite[p.
5]{JKO}. Thanks to $(i)$, one can take the limit as $\e\to 0$ in the
right-hand side of \eqref{variat-e} and show that $\lambda_0^{\e}$
tends to the largest eigenvalue of
\begin{eqnarray*}
d_V\Delta  - \mu_V(1-{\mathscr M}_y (R_0))Id,
\end{eqnarray*}
on $\Omega_{\ell}$ with periodic boundary conditions. Let us prove
this claim. As a first step, we observe that there exist two
constants $C_1$ and $C_2$ such that $C_1\le\lambda_0^{\e}\le C_2$,
since the function $R_0^{\e}$ is bounded.

Next, $\lambda_0^{\e}$ is the largest eigenvalue of the Sturm-Liouville
eigenvalue problem
\begin{eqnarray*}
d_V\Delta\psi - \mu_V(1-R_0^{\e})\psi =\lambda \psi,
\end{eqnarray*}
with periodic boundary conditions we documented in Theorem
\ref{sturm-eigen}, associated with the eigenfunction $\psi^{\e}$. We
may assume that $\int_{\Omega_{\ell}} (\psi^{\e})^2 dx =1$. As we
pointed it out in Theorem \ref{sturm-eigen}, $\psi^{\e}$ does not
change sign.

It is clear that $\psi^{\e}$ is bounded in $H_{\sharp}^2$. Then,
there exists an infinitesimal sequence $\{\e_n\}$ such that
$\lambda_0^{\e_n} \to \lambda_0^{0}$, $\psi^{\e_n}\to \psi^{0}$
weakly in $H_{\sharp}^2$, strongly in $H_{\sharp}^{2-\eta}$ and
(hence) uniformly in ${\overline \Omega_{\ell}}$. Note that
$\int_{\Omega_{\ell}} (\psi^{0})^2 dx =1$ and $\psi^{0}$ does not
change sign.

Since
\begin{eqnarray*}
d_V\Delta\psi^{\e_n} - \mu_V(1-R_0^{\e_n})\psi^{\e_n}
=\lambda^{\e_n} \psi^{\e_n},
\end{eqnarray*}
it is not to difficult to pass to the limit in the above equation as
$n\to +\infty$ in the distributional sense and see that
\begin{eqnarray} \label{limit-eigen}
d_V\Delta\psi^{0} - \mu_V(1-{\mathscr M}_y (R_0))\psi^{0} =\lambda^{0}_0 \psi^{0},
\end{eqnarray}
with periodic boundary conditions. Therefore, $\lambda_{0}^{0}$ is
an eigenvalue of the operator $d_V\Delta- \mu_V(1-R_0^{\e})Id$,
associated with the eigenfunction $\psi^{0}$. Since $\psi^{0}$ does
not change sign, $\lambda^{0}_0$ is the largest eigenvalue of
\eqref{limit-eigen}. Obviously, $\psi^{0}$ is a constant, hence
$\lambda^{0}_{0}$ is explicit. Finally, checking that all the
sequence $\lambda_0^{\e}$ converges to $\lambda_0^0$, as $\e \to 0$,
is an easy task.
\end{proof}

Next, we prove the following result.
\begin{theorem}
Assume that ${\mathscr M}_y (R_0) > 1$. Then, there exists $\e_0$
such that, for any $\e\in (0,\e_0]$, the equation \eqref{semil-e}
has a positive solution $V^{\e}\in H^2_{\sharp}$. As
$\varepsilon\to 0$, $V^{\e}$ tends to
\begin{eqnarray*}
V^0=\frac{\mt}{\gamma}({\mathscr M}_y (R_0)-1),
\end{eqnarray*}
in $H^1_{\sharp}$ and uniformly in $\overline{\Omega_{\ell}}$.
\end{theorem}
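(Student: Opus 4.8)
The plan is to combine the variational convergence from Lemma \ref{lem-4.2} with a compactness argument in the spirit of classical homogenization, reducing everything to the scalar equation \eqref{semil-e}. Existence is immediate: by Lemma \ref{lem-4.2}(ii) the quantity $\lambda_0^{\e}$ defined in \eqref{variat-e} tends to $\mv(\mathscr{M}_y(R_0)-1)$, which is strictly positive by hypothesis. Hence there exists $\e_0>0$ such that $\lambda_0^{\e}>0$ for every $\e\in(0,\e_0]$, and Theorem \ref{thm-2.4}, applied with $R_0$ replaced by $R_0^{\e}$, furnishes a (unique) positive solution $V^{\e}\in H^2_{\sharp}$ of \eqref{semil-e}.

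Next I would establish uniform bounds. Inspecting the proof of Theorem \ref{thm-2.4}, the constant upper solution equals $\frac{\mt(\mathscr R-1)}{\gamma}$, where $\mathscr R=\|R_0^{\e}\|_{\infty}=\|R_0\|_{\infty}$ is independent of $\e$; thus $0<V^{\e}\le \frac{\mt(\mathscr R-1)}{\gamma}$ uniformly in $\e$. Writing $h(t)=t/(\gamma t+\mt)$, one has $0\le R_0^{\e}h(V^{\e})\le\mathscr R$, so the right-hand side of \eqref{semil-e} is uniformly bounded in $L^2$, and elliptic regularity for the operator $d_V\Delta-\mv Id$ yields a uniform bound on $\|V^{\e}\|_{H^2_{\sharp}}$. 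By the compact embeddings $H^2_{\sharp}\hookrightarrow H^1_{\sharp}$ and $H^2_{\sharp}\hookrightarrow C(\overline{\Omega_{\ell}})$ (valid in dimension two), every sequence $\e_n\to 0$ admits a subsequence along which $V^{\e_n}$ converges weakly in $H^2_{\sharp}$, strongly in $H^1_{\sharp}$, and uniformly in $\overline{\Omega_{\ell}}$, to some limit $\widetilde V\ge 0$.

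The heart of the proof is the passage to the limit in the weak formulation. Testing \eqref{semil-e} against $\phi\in H^1_{\sharp}$ gives
\[
-d_V\int_{\Omega_{\ell}}\nabla V^{\e}\cdot\nabla\phi\,dx-\mv\int_{\Omega_{\ell}}V^{\e}\phi\,dx=-\mt\mv\int_{\Omega_{\ell}}R_0^{\e}\,h(V^{\e})\,\phi\,dx.
\]
The two terms on the left pass to the limit thanks to the weak $H^1_{\sharp}$ and strong $L^2$ convergence. In the nonlinear term, $h(V^{\e_n})\phi$ converges strongly (indeed uniformly) to $h(\widetilde V)\phi$, while $R_0^{\e_n}\rightharpoonup\mathscr{M}_y(R_0)$ weakly by Lemma \ref{lem-4.2}(i); the product of a weakly convergent and a strongly convergent factor converges to the product of the limits. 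Therefore $\widetilde V$ is a weak, hence (by regularity) strong, solution of
\[
d_V\Delta\widetilde V-\mv\widetilde V=-\mt\mv\,\mathscr{M}_y(R_0)\,\frac{\widetilde V}{\gamma\widetilde V+\mt},
\]
which is exactly equation \eqref{semil} with the constant reproductive ratio $\mathscr{M}_y(R_0)>1$.

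It remains to identify $\widetilde V$ and to upgrade to convergence of the whole family. By Theorem \ref{thm-2.4} with constant $R_0=\mathscr{M}_y(R_0)$, the only non-negative solutions of the homogenized equation are $0$ and the constant $V^0=\frac{\mt}{\gamma}(\mathscr{M}_y(R_0)-1)$. To exclude $\widetilde V\equiv 0$ I would invoke the lower solution $\underline v^{\e}_0=c^{\e}\varphi_0^{\e}$ from the proof of Theorem \ref{thm-2.4}: its amplitude $c^{\e}=\min\{\lambda_0^{\e}/(\gamma\mv\mathscr R),\,\mt(\mathscr R-1)/\gamma\}$ stays bounded below by a positive constant since $\lambda_0^{\e}\to\mv(\mathscr{M}_y(R_0)-1)>0$, while the normalized principal eigenfunction $\varphi_0^{\e}$ converges uniformly to the constant $1$ (this is exactly the eigenfunction analysis carried out in the proof of Lemma \ref{lem-4.2}). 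Passing to the limit in $V^{\e_n}\ge c^{\e_n}\varphi_0^{\e_n}$ yields $\widetilde V\ge c^*>0$, so necessarily $\widetilde V=V^0$. Since every subsequence produces the same limit $V^0$, the whole family $V^{\e}$ converges to $V^0$ in $H^1_{\sharp}$ and uniformly in $\overline{\Omega_{\ell}}$ as $\e\to 0$. The main obstacle is precisely this passage to the limit in the nonlinear term: one cannot simply multiply the two weak limits, and it is the uniform $H^2_{\sharp}$ bound, yielding strong and uniform convergence of $V^{\e}$, that decouples the oscillating coefficient $R_0^{\e}$ from the nonlinearity and lets the homogenized mean value $\mathscr{M}_y(R_0)$ emerge.
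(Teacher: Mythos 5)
Your proposal is correct and follows essentially the same route as the paper: existence for small $\e$ via Lemma \ref{lem-4.2}(ii) and Theorem \ref{thm-2.4}, a uniform $H^2_{\sharp}$ bound from the $\e$-independent constant upper solution, subsequential strong/uniform convergence, passage to the limit in the equation with $R_0^{\e}\rightharpoonup{\mathscr M}_y(R_0)$, exclusion of the trivial limit via the lower solution $c^{\e}\varphi_0^{\e}$, and a subsequence argument for the full family. You merely spell out two steps the paper leaves implicit (the weak-times-strong convergence in the nonlinear term and the identification of the constant limit through the uniqueness part of Theorem \ref{thm-2.4}), which is harmless.
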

\begin{proof}
As a first step, we observe that, since ${\mathscr M}_y (R_0) > 1$,
from Lemma \ref{lem-4.2}(ii) it follows that there exist
$\delta,~\varepsilon_0>0$ such that $\lambda_0^{\e}>\delta$ for
$\e\in (0,\varepsilon_0]$, and \eqref{semil-e} has a unique positive
solution $V^{\e}$. Clearly, $\Delta V^{\e}$ is bounded in
$L^{\infty}(\Omega_{\ell})$ and this implies that $V^{\e}$ is
bounded in $H^2_{\sharp}$. Arguing as in the proof of Lemma
\ref{lem-4.2}, one can extract an infinitesimal sequence $\{\e_n\}$
such that $V^{\e_n}$ converges strongly in $H^{2-\eta}$ to a limit
$V^0\in H^2_{\sharp}$, which verifies the equation
\begin{equation}
\label{limit-1}
d_V \Delta V^0- \mv V^0 = -\mt\mv {\mathscr M}_y(R_0)\frac{V^0}{\gamma V^0 +\mt}.
\end{equation}
Because of the periodic boundary conditions, Equation
\eqref{limit-1} has only constant solutions. Therefore,
\begin{eqnarray*}
- \mv V^0 = -\mt\mv {\mathscr M}_y (R_0)\frac{V^0}{\gamma V^0 +\mt},
\end{eqnarray*}
and it only remains to prove that $V^0$ is not the trivial solution.
With obvious notations, we recall (see the proof of Theorem
\ref{thm-2.4}) that $V^{\e} \geq
\underline{v}_0^{\e}=c^{\e}\varphi_0^{\e}$ where $\varphi_0^{\e}$ is
the positive eigenfunction associated with the largest eigenvalue
$\lambda_0^{\e}$ and
\begin{eqnarray*}
c^{\e}= \min \left\{\frac{\lambda_0^{\e}}{\gamma\mu_V{\mathscr R}},
\frac{\mt({\mathscr R} -1)}{\gamma} \right\} \geq \min \left\{\frac{\delta}{\gamma\mu_V{\mathscr R}},
\frac{\mt({\mathscr R} -1)}{\gamma} \right\}.
\end{eqnarray*}
Since $\max \varphi_0^{\e} =1$, it is clear that $V^{\e}$ remains bounded away from $0$ whenever $\e \leq \e_0$.

Finally, checking that  $V^{\e}$ itself converges to $V^0$ as $\e\to
0$ is immediate. This concludes the proof.
\end{proof}

\subsection{Numerical illustration (homogenization)}
We consider a model where $R_0$ is as in Fig. \ref{R03}, taking its
values on the elementary $4 \times 4$ grid (b) as in Tab. \ref{R0}.
\begin{table}[!h]
\begin{center}
\begin{tabular}{|c|c|c|c|c|}
\hline
\mbox{$j$}& \multicolumn{1}{|p{1.5cm}|}{\centering $R_0(1,j)$}
& \multicolumn{1}{|p{1.5cm}|}{\centering $R_0(2,j)$} & \multicolumn{1}{|p{1.5cm}|}{\centering $R_0(3,j)$}& \multicolumn{1}{|p{1.5cm}|}{\centering $R_0(4,j)$} \\
\hline \hline
$1$ & $1.60$\phantom{$0$}  & $1.41$\phantom{$0$E$-4$}        & $1.55$\phantom{$0$}  & $0.819$ \\
$2$ & $0.800$  & $0.165$\phantom{E$-4$}       & $2.59$\phantom{$0$}  & $0.872$ \\
$3$ & $1.20$\phantom{$0$}  & $0.489$\phantom{E$-4$}       & $1.37$\phantom{$0$}  & $0.453$ \\
$4$ & $2.09$\phantom{$0$}  & $4.25$E$-4$\phantom{$0$}    & $0.270$ & $2.80$\phantom{$0$} \\
\hline
\end{tabular}
\\[6mm]
\end{center}
\caption{Values of $R_0(i,j)$, $i,j=1,\ldots,4$, on $\Omega_{\e}$ as
in Fig. \ref{R03}b, to be viewed as a $4 \times 4$ matrix.}
\label{R0}
\end{table}

It is easy to compute the mean value ${\mathscr
M}_y (R_0)=1.16$ and the homogenized viral density $V^0=16.1$.
Fig. \ref{homog} shows how the virus $V^{\e}$ at $\e=0.1$ (left) oscillates slightly around its
homogenized limit $V^0$ (right).
\begin{figure}[htp]
\begin{center}
\epsfig{file=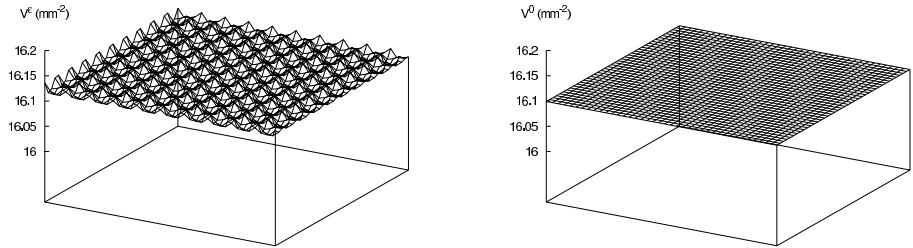,scale=0.525}
\end{center}
\caption{$V^{\e}$ (left) vs. $V^0$ (right). The grid and parameters are as in Fig. \ref{R03}
and Tab. \ref{R0}; $\e=0.1,~d_V = 1,~\ell = 1$.}
\label{homog}
\end{figure}

\appendix
\setcounter{equation}{0}
\section{Proof of Theorem \ref{sturm-eigen}}
\label{app-A}
\begin{proof}
It is well known that the realization $\Delta_2$ of the Laplacian in
$L^2$, with domain $H^2_{\sharp}$, is a sectorial operator. Since
${\mathscr A}$, is a bounded perturbation of the Laplacian,
${\mathscr A}$ is sectorial as well, and, hence, its spectrum is not
empty. Let us fix $\lambda_0\in\rho({\mathscr A})\cap
(\mu_{\infty},+\infty)$ such that $[\lambda_0,+\infty)$ is in the
resolvent set of $T$. Here, by $\mu_{\infty}$ we denote the sup-norm
of the function $\mu$. The operator ${\mathscr A}_0:={\mathscr
A}-\lambda_0I$ turns out to be invertible and its resolvent set
contains $[0,+\infty)$. Since the operator $T_0^{-1}$ is continuous
from $L^2$ into $H^2_{\sharp}$, it is continuous, in particular,
from $H^1_{\sharp}$ into itself, when this latter space is endowed
with the inner product
\begin{align*}
\langle v,w\rangle&=
d\int_{\Omega_{\ell}}(D_{x_1}vD_{x_1}w+D_{x_2}vD_{x_2}w
)dx+\int_{\Omega_{\ell}}(\mu+\lambda_0) vwdx\\
&:=d\int_{\Omega_{\ell}}\nabla v\cdot\nabla
wdx+\int_{\Omega_{\ell}}(\mu+\lambda_0) vwdx,
\end{align*}
which is equivalent to the Euclidean inner product of $H^1$. Since
$H^2_{\sharp}$ is compactly embedded into $H^1_{\sharp}$ (see e.g.,
\cite[Thm. 3.7]{agmon}), the operator ${\mathscr A}_0^{-1}$ is
compact from $H^1_{\sharp}$ into itself. Moreover, ${\mathscr
A}_0^{-1}$ is self-adjoint in $H^1_{\sharp}$. Indeed,
\begin{align}
\langle {\mathscr A}_0^{-1}u,v\rangle =&d\int_{\Omega_{\ell}}\nabla
{\mathscr A}_0^{-1}u\cdot\nabla
vdx+\int_{\Omega_{\ell}}(\mu+\lambda_0) {\mathscr A}_0^{-1}u\,vdx\nonumber\\
= &-d\int_{\Omega_{\ell}} \Delta {\mathscr A}_0^{-1}u\,vdx +
\int_{\Omega_{\ell}}(\mu+\lambda_0) {\mathscr A}_0^{-1}u\,vdx\nonumber\\
= &-\int_{\Omega_{\ell}}uvdx, \label{compact}
\end{align}
for any $u,v\in L^2$. Now, from the general theory of self-adjoint
compact operators, it follows that the spectrum of ${\mathscr
A}_0^{-1}$ consists of a sequence of real eigenvalues which
converges to $0$. As a byproduct, the spectrum of ${\mathscr A}_0$
consists of a sequence of eigenvalues diverging to $-\infty$. More
precisely, $\sigma({\mathscr A}_0)\subset (-\infty,0)$ and
$\lambda\in \sigma({\mathscr A}_0)$ if and only if $\lambda^{-1}$ is
in $\sigma({\mathscr A}_0^{-1})$. In particular, the maximum
eigenvalue of ${\mathscr A}_0$ is the inverse of the minimum
eigenvalue of ${\mathscr A}_0^{-1}$. Since ${\mathscr A}_0^{-1}$ is
a compact operator, its minimum eigenvalue is defined by
\begin{eqnarray*}
\lambda_{\min}({\mathscr A}_0^{-1})=\inf_{\psi\in
H^1_{\sharp},\,~\psi\neq 0} \left\{\frac{\langle {\mathscr
A}_0^{-1}\psi,\psi\rangle}{\langle\psi,\psi\rangle}\right\}.
\end{eqnarray*}
Taking \eqref{compact} into account, we can estimate
\begin{align*}
\lambda_{\min}({\mathscr A}_0^{-1})& = \inf_{\psi\in
H^1_{\sharp},\,~\psi\neq 0}\left\{
\frac{-\int_{\Omega_{\ell}}\psi^2dx}{d_V\int_{\Omega_{\ell}}|\nabla_x\psi|^2dx
+\int_{\Omega_{\ell}}(\mu+\lambda_0)\psi^2dx}\right\}\\
& = -\sup_{\psi\in H^1_{\sharp},\,~\psi\neq 0}\left\{
\frac{\int_{\Omega_{\ell}}\psi^2dx}{d_V\int_{\Omega_{\ell}}|\nabla_x\psi|^2dx
+\int_{\Omega_{\ell}}(\mu+\lambda_0)\psi^2dx}\right\}\\
& =-\left (\inf_{\psi\in H^1_{\sharp},\,~\psi\neq 0}\left\{
\frac{d\int_{\Omega_{\ell}}|\nabla_x\psi|^2dx
+\int_{\Omega_{\ell}}\mu\psi^2dx}{\int_{\Omega_{\ell}}\psi^2dx}\right\}
+\lambda_0\right )^{-1}.
\end{align*}
Formula \eqref{variat} follows at once, observing that
$\lambda_{\max}({\mathscr A})=\lambda_0+\lambda_{\max}({\mathscr
A}_0)$.

The last assertion of the theorem follows from the Krein-Rutman
Theorem applied to the restriction of ${\mathscr A}_0$ to the space
$C_{\sharp}(\R^2)$ (of all functions $f:\R^2\to\R$ which are
continuous with period $\ell$ in each variable), via the maximum
principle (see e.g., \cite{schaefer}). Indeed, since $H^2_{\sharp}$
is continuously embedded into $C_{\sharp}(\R^2)$, the restriction
$({\mathscr A}^{-1}_0)_{| C_{\sharp}(\R^2)}$ of the operator
${\mathscr A}_0^{-1}$ to $C_{\sharp}(\R^2)$ is compact from
$C_{\sharp}(\R^2)$ into itself. Moreover, it is clear that
${\mathscr A}^{-1}_0$ and $({\mathscr A}^{-1}_0)_{|
C_{\sharp}(\R^2)}$ have the same eigenvalues. Let now $f$ be a
non-negative (non trivial) function in $C_{\sharp}(\R^2)$. Then, the
function ${\mathscr A}^{-1}_0f$ is in $H^2_{\sharp}$. Hence, in
particular, it belongs to $H^2((-\ell,2\ell)\times (-\ell,2\ell))$
and solves the equation $(\mu+\lambda_0) u-d\Delta u=f$. By the
classical maximum principle, $u$ is non-negative in
$(-\ell,2\ell)\times (-\ell,2\ell)$. Actually $u$ is everywhere
positive. Indeed if $u(x_0)=0$ at some point $x_0\in
(-\ell,2\ell)\times (-\ell,2\ell)$, then, still by the maximum
principle, it would follow that $u\equiv 0$ in $[-\ell,2\ell]\times
[-\ell,2\ell]$, which clearly cannot be the case.
\end{proof}

\section{A maximum principle}

\begin{proposition}\label{max-princ}
Let ${\mathscr L}$ be a second order operator with constant
coefficients. Let $u\in H^2_{\sharp}$ satisfy the inequality
${\mathscr L}u\le 0$. Then, $u\ge 0$. Similarly, if $u$ belongs to $
C^1((0,T);C(\overline{\Omega_{\ell}}))\cap C([0,T];H^2_{\sharp})$,
is such that ${\mathscr L}u\in C([0,T]\times \Omega_{\ell})$, and it
satisfies the differential inequalities $D_tu-{\mathscr L}u\ge 0$
and $u\le 0$ in $[0,T]\times\partial \Omega_{\ell}$, then, $u\le 0$
in $[0,T]\times \Omega_{\ell}$.
\end{proposition}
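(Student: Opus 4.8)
The plan is to establish both assertions by energy (integration-by-parts) arguments rather than by pointwise extremum principles, since $u$ has only Sobolev regularity. Write the constant-coefficient operator as $\mathscr L u=\sum_{i,j}a_{ij}D_{ij}u+\sum_i b_iD_iu-cu$, with $(a_{ij})$ constant, symmetric and positive definite, $b_i$ constant and $c>0$; the operators to which the proposition is applied, such as $d_V\Delta-\mu_V Id$, are of this form, and the strict positivity of the zeroth order coefficient is precisely what is needed on a periodic cell (for $c=0$ the nonzero constants would violate the conclusion). Two features will be used repeatedly: periodicity, which annihilates all boundary terms in every integration by parts over $\Omega_{\ell}$, and the constancy of the $b_i$, which turns the first order contribution into the integral of a total derivative, hence zero.

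For the stationary inequality I would test $\mathscr L u\le 0$ against the negative part $u^-:=\max\{-u,0\}$. Since $u\in H^2_{\sharp}$, Stampacchia's theorem gives $u^-\in H^1_{\sharp}$ with $\nabla u^-=-\chi_{\{u<0\}}\nabla u$. Multiplying by $u^-\ge 0$ and integrating over $\Omega_{\ell}$, the second order part becomes $\int_{\Omega_{\ell}}\sum_{i,j}a_{ij}D_iu^-D_ju^-\,dx$, the first order part vanishes because $\sum_i b_i\int_{\Omega_{\ell}}D_i\big((u^-)^2\big)\,dx=0$, and the zeroth order part yields $c\int_{\Omega_{\ell}}(u^-)^2\,dx$, so that
\[
\int_{\Omega_{\ell}}\sum_{i,j}a_{ij}D_iu^-D_ju^-\,dx+c\int_{\Omega_{\ell}}(u^-)^2\,dx\le 0 .
\]
Positive definiteness of $(a_{ij})$ together with $c>0$ forces $u^-\equiv 0$, i.e. $u\ge 0$. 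An equivalent route, parallel to Appendix \ref{app-A}, is to extend $u$ periodically to $(-\ell,2\ell)^2$ and apply the classical weak maximum principle there; the energy argument is preferable here because it uses nothing beyond $u\in H^2_{\sharp}$.

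For the evolutionary assertion I would run the time-dependent analogue. After replacing $u$ by $-u$ if necessary — which interchanges the supersolution and subsolution formulations and the two signs of the data, so that the stated sign convention is covered — it suffices to show that a subsolution $D_tu-\mathscr L u\le 0$ with nonpositive data stays nonpositive. Set $E(t):=\tfrac12\int_{\Omega_{\ell}}(u^+(t,\cdot))^2\,dx$ with $u^+:=\max\{u,0\}$. Using $u\in C^1((0,T);C(\overline{\Omega_{\ell}}))\cap C([0,T];H^2_{\sharp})$ one checks that $E\in C^1$ with $E'(t)=\int_{\Omega_{\ell}}u^+\,D_tu\,dx$; inserting $D_tu\le\mathscr L u$ and integrating by parts exactly as in the stationary step gives $E'(t)\le-\int_{\Omega_{\ell}}\sum_{i,j}a_{ij}D_iu^+D_ju^+\,dx-c\int_{\Omega_{\ell}}(u^+)^2\,dx\le -2c\,E(t)$. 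Since the data impose $E(0)=0$ and $E\ge 0$, Gronwall's inequality yields $E\equiv 0$, i.e. the required sign throughout $[0,T]\times\Omega_{\ell}$. Here the periodic (lateral) boundary is automatically matched and plays no active role, so the only effective datum is the one carried on the initial slice.

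The main difficulty is not the algebra of signs but the low regularity: pointwise second-derivative arguments at an interior extremum are unavailable for $H^2$ (respectively $C^1$-in-time) functions, which is why I would work entirely in the weak formulation. The two points deserving care are therefore the validity of the Stampacchia chain rule for $u^{\pm}\in H^1_{\sharp}$ and the differentiability of $t\mapsto E(t)$, together with the legitimacy of $\int_{\Omega_{\ell}}\mathscr L u\,u^{\pm}\,dx$ and its integration by parts under the stated regularity — in particular that $\mathscr L u\in C([0,T]\times\Omega_{\ell})$ suffices. Once these are in place, everything reduces to the nonnegativity of the quadratic form $\xi\mapsto\sum_{i,j}a_{ij}\xi_i\xi_j$ and of $c$, and the argument closes.
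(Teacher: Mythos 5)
Your proof is correct, but it takes a genuinely different route from the paper's. The paper extends $u$ by periodicity, invokes the embedding $H^2_{\sharp}\hookrightarrow C(\overline{\Omega_{\ell}})$ (valid in dimension two) to obtain a continuous function attaining an interior extremum in $(-\ell,2\ell)^2$, and concludes with the classical pointwise maximum principle, treating the elliptic assertion as a special case of the parabolic one. You instead stay entirely at the weak level: you test against the truncations $u^{\mp}$, use positive definiteness of the principal part, kill the first-order term by periodicity, and close the evolutionary case with Gronwall's inequality. What your approach buys is independence of the Sobolev embedding (hence of the space dimension) and of any pointwise meaning of $\mathscr L u$ --- a real issue in the elliptic case, where only $\mathscr L u\in L^2$ is assumed; it also forces into the open the structural hypotheses (ellipticity and strict positivity of the zeroth-order coefficient $c$) that the statement leaves implicit but that are indispensable for the first assertion, as your $c=0$ remark shows. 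The price is the verification of the Stampacchia chain rule and of the differentiability of $E$, both of which you correctly flag and which hold under the stated regularity. One further point in your favour: the second assertion as literally written (a supersolution with nonpositive lateral data concluded to be nonpositive) is sign-inconsistent, and the condition on $\partial\Omega_{\ell}$ is vacuous under periodic boundary conditions; the paper's own proof shares this confusion (it speaks of a ``negative minimum'' of a function that is ``not everywhere non-positive''). Your reduction to ``a subsolution with nonpositive initial datum stays nonpositive'', with the initial slice identified as the only effective datum, is the correct repair and matches the way the proposition is actually applied in Step 2 of the proof of Theorem \ref{stab-lambda_0-positive}(iii).
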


\begin{proof}
For the reader's convenience, we sketch the proof of the second
statement, the first one being a particular case of the second one.
Since $H^2_{\sharp}$ continuously embeds in the set of all
continuous functions which are periodic, with period $\ell$ with
respect to all the variables, then $u$ can be extended by
periodicity with a function (still denoted by $u$) which is
continuous in $[0,T]\times\R^N$ and is therein continuously
differentiable with respect to the time variable.

Suppose by contradiction that $u$ is not everywhere non-positive in
$[0,T]\times\R^N$. Then, $u$ has a negative minimum at some point
$(t_0,x_0)\in (0,T_0]\times (-\ell,2\ell)^2$. Then, clearly,
$D_tu(t_0,x_0)\le 0$. Moreover, since ${\mathscr L}u$ is a
continuous function, ${\mathscr L}u(t_0,x_0)\ge 0$. The classical
maximum principle yields the assertion.
\end{proof}

\section*{Acknowledgment} One of the authors (L.L.) greatly
acknowledges the Institute of Mathematics of the University of Bordeaux I
for the warm hospitality during his visit as an invited professor (2008-2009).

\end{document}